\newcommand{\dbar}{\overline{\partial}}
\newcommand{\ddt}{\frac{\partial}{\partial t}}
\newcommand{\ov}[1]{\overline{#1}}
\newcommand{\tr}[2]{\textrm{tr}_{#1}{#2}}
\newcommand{\ve}{\varepsilon}
\renewcommand{\leq}{\leqslant}
\renewcommand{\geq}{\geqslant}
\renewcommand{\le}{\leqslant}
\renewcommand{\ge}{\geqslant}
\newcommand{\be}{\begin{equation}}
\newcommand{\ee}{\end{equation}}
\begin{document}
\newcounter{theor}
\setcounter{theor}{1}
\newtheorem{claim}{Claim}
\newtheorem{theorem}{Theorem}[section]
\newtheorem{lemma}[theorem]{Lemma}
\newtheorem{corollary}[theorem]{Corollary}
\newtheorem{proposition}[theorem]{Proposition}
\newtheorem{question}{question}[section]
\newtheorem{defn}{Definition}[theor]
\newtheorem{remark}{Remark}[section]

\newenvironment{example}[1][Example]{\addtocounter{remark}{1} \begin{trivlist}
\item[\hskip
\labelsep {\bfseries #1  \thesection.\theremark}]}{\end{trivlist}}

\title[The degenerate J-flow]{The degenerate J-flow and the Mabuchi energy on minimal surfaces of general type}%

\author{Jian Song}
\address{Jian Song, Department of Mathematics,
Rutgers}
\email{jiansong@math.rutgers.edu}
\author{Ben Weinkove}
\address{Ben Weinkove, Mathematics Department,
Northwestern University}
\email{weinkove@math.northwestern.edu}

\thanks{}%
\footnotetext[1]{Research supported in part by NSF grants  DMS-08047524 and DMS-1105373.  This work was carried out while the second-named author was on leave from UC San Diego.}

\begin{abstract}{ We prove existence, uniqueness and convergence of solutions of the degenerate J-flow on K\"ahler surfaces.  As an application, we establish the 
 properness of the Mabuchi energy for K\"ahler classes in a certain subcone of the K\"ahler cone on  minimal surfaces of general type.  }
\end{abstract}

\maketitle

\section{Introduction}

Let $M$ be a compact K\"ahler surface with two K\"ahler metrics $\omega_0$ and $\chi_0$.  Let $\mathcal{P}_{\chi_0}$ be the space of smooth functions $\varphi$ with $\chi_{\varphi}:= \chi_0 + dd^c \varphi>0$.  The \emph{J-flow} is a parabolic flow defined on $\mathcal{P}_{\chi_0}$ by
\begin{equation} \label{jflowsmooth}
\ddt{} \varphi = c_0 - \frac{2 \chi_{\varphi} \wedge \omega_0}{\chi_{\varphi}^2}, \quad \varphi|_{t=0} =\varphi_0 \in \mathcal{P}_{\chi_0},
\end{equation}
where $c_0$ is defined by
$$c_0 = \frac{2 [\chi_0] \cdot [ \omega_0] }{[\chi_0]^2}.$$
The J-flow was introduced by Donaldson \cite{D1} in the setting of moment maps and by Chen \cite{Ch1} as the gradient flow of the $\mathcal{J}$-functional which appears in his formula for the Mabuchi energy.
Smooth solutions to (\ref{jflowsmooth}) exist for all time and are unique \cite{Ch2}.  

Under the assumption 
\begin{equation} \label{dc}
c_0 [\chi_0] - [\omega_0] >0,
\end{equation}
it was shown in \cite{W1} that the solution to the J-flow converges smoothly to $\varphi_{\infty}$ solving the critical equation
\begin{equation} \label{critical1}
2 \chi_{{\varphi}_{\infty}} \wedge \omega_0 = c_0 \chi_{\varphi_{\infty}}^2.
\end{equation}
The fact that smooth solutions to (\ref{critical1}) exist under the condition (\ref{dc}) was conjectured by Donaldson \cite{D1} and proved by Chen by reducing the equation to the complex Monge-Amp\`ere equation solved by Yau \cite{Y1}.
In higher dimensions, it was shown in \cite{W2} that the flow converges under the cohomological assumption $c_0[\chi] - (n-1)[\omega_0] >0$.    Necessary and sufficient conditions for convergence of the J-flow in terms of $[\chi_0]$ and $\omega_0$ were found in \cite{SW1}. 

In \cite{FLM, FL1}, convergence results were proved for generalizations of the J-flow known as inverse $\sigma_k$-flows.  In \cite{FL2}, Fang-Lai analyzed the behavior of the inverse $\sigma_k$-flow on general K\"ahler classes for metrics with  Calabi symmetry.  The J-flow  has been investigated on Hermitian manifolds by Y. Li \cite{LiY}, and the critical equation on Hermitian manifolds with boundary  by Guan-Li \cite{GL}.  
An equation bearing strong similarities to the critical equation for the J-flow is the \emph{complex Hessian equation} (see for example \cite{LiS, B, H, HMW, Chi}); it has been studied intensely in the last few years, and the existence of solutions on compact K\"ahler manifolds was recently established by Dinew-Ko{\l}odziej \cite{DK}.
  
We now return to the discussion of the J-flow. 
In complex dimension two, the behavior of the J-flow was investigated \cite{FLSW} in the case 
where $c_0 [\chi_0]-[\omega_0] \ge 0$, where $\beta \ge 0$ means that the cohomology class $\beta$ admits a smooth nonnegative representative.  A uniform $L^{\infty}$ estimate for $\varphi$ was established, and it was shown that the J-flow converges smoothly to a singular K\"ahler metric away from a finite number of curves of negative self-intersection.

In this paper, we generalize the result of \cite{W1} in a different direction.  We consider the case where $\omega_0$ is no longer a K\"ahler metric, but a closed  (1,1) form satisfying a certain nonnegativity condition.  More precisely, assume that we have a background K\"ahler metric $\hat{\omega}$ and an effective divisor $D$ on $M$ with associated line bundle $[D]$. Let $H$ be a fixed Hermitian metric on the line bundle $[D]$, and let $s$ be a holomorphic section of $[D]$ which vanishes exactly along $D$.  Our assumption on $\omega_0$ is:
\begin{equation} \label{omega0}
\omega_0 \  \textrm{is a smooth closed (1,1) form with } \omega_0 \ge \frac{1}{C_0} |s|^{2\beta}_H \hat{\omega} \ \textrm{and} \ \omega_0 - \rho R_H \ge \frac{1}{C_0} \hat{\omega},
\end{equation}
for some positive constants $C_0, \beta, \rho$.  Here, $R_H  = - dd^c \log H$ denotes the curvature form of the Hermitian metric $H$. 

Clearly (\ref{omega0}) holds  if $\omega_0$ is K\"ahler.  It is not uncommon for non-K\"ahler cohomology classes to admit a closed form $\omega_0$ satisfying (\ref{omega0}).    Indeed, we will see below that if $M$ is a minimal surface of general type then the canonical class, if it is not ample, is such an example.   Also, it is well-known that such classes can be found on the boundary of the K\"ahler cone of blow-ups of K\"ahler manifolds, as discussed in \cite{SW2} for example.

 We call the equation (\ref{jflowsmooth}) with $\omega_0$ satisfying (\ref{omega0}) the 
 \emph{degenerate J-flow}.  Now (\ref{jflowsmooth}) is no longer a parabolic equation in general and we cannot expect to obtain smooth solutions.  The main result of this paper is that there exists a unique ``weak'' solution to this degenerate parabolic equation, which converges to a ``weak'' solution of the critical equation.  More precisely, define a space 
 \begin{equation}
 \mathcal{P}^{\textrm{weak}}_{\chi_0}:= \{ \varphi \in C^{\infty}(M \setminus D) \cap \textrm{PSH}_{\chi_0}(M) \cap L^{\infty}(M) \ | \ \chi_0  + dd^c \varphi >0 \textrm{ on } M\setminus D \}.
 \end{equation}
 where we are writing $D$ also for the subset of $M$ defined by the divisor $D$.  Here $\textrm{PSH}_{\chi_0}(M)$ consists of upper semicontinuous functions $\varphi: M \rightarrow [-\infty, \infty)$ such that $\varphi+\psi_0$ is plurisubharmonic, where $\psi_0$ is a (smooth) local K\"ahler potential for $\chi_0$.

We have the following result.

\begin{theorem} \label{maintheorem}  Let $M$ be a compact K\"ahler surface, with K\"ahler metrics $\chi_0$ and $\hat{\omega}$.  Assume that $\omega_0$ satisfies (\ref{omega0}) and assume that
\begin{equation} \label{condition}
c_0 [\chi_0] - [\omega_0] >0, \quad \textrm{for} \quad c_0 = \frac{2 [\chi_0] \cdot [ \omega_0] }{[\chi_0]^2}.
\end{equation}
For any smooth $\varphi_0 \in \mathcal{P}_{\chi_0}$, 
 there exists a unique solution $\varphi=\varphi(t) \in \mathcal{P}^{\emph{weak}}_{\chi_0}$ of the degenerate J-flow
\begin{equation} \label{jflowdegen}
\ddt{} \varphi = c_0 - \frac{2 \chi_{\varphi} \wedge \omega_0}{\chi_{\varphi}^2} \textrm{ on } M \setminus D, \quad \varphi|_{t=0} =\varphi_0,
\end{equation}
with $\displaystyle{\sup_{M \setminus D} \left| \frac{\partial{\varphi}}{\partial t} \right|}$ bounded (independent of $t$).  

As $t\rightarrow \infty$, 
$$\varphi(t) \overset{C^{\infty}_{\emph{loc}}(M\setminus D)}{\longrightarrow} \varphi_{\infty},$$
where $\varphi_{\infty} \in \mathcal{P}^{\emph{weak}}_{\chi_0}$ solves the critical equation
\begin{equation} \label{criteq}
2 \chi_{{\varphi}_{\infty}} \wedge \omega_0 = c_0 \chi_{\varphi_{\infty}}^2 \quad \textrm{on} \quad M \setminus D.
\end{equation}
Moreover, $\varphi_{\infty} \in \mathcal{P}^{\emph{weak}}_{\chi_0}$ is the unique solution of the critical equation up to the addition of a constant.
\end{theorem}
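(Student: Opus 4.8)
The plan is to obtain the degenerate flow as a limit of non-degenerate J-flows, and to control the limit by a priori estimates that degenerate only along $D$. First I would regularize, setting $\omega_\epsilon = \omega_0 + \epsilon\hat\omega$, which is a genuine Kähler metric for every $\epsilon>0$. Since (\ref{condition}) is an open condition on cohomology classes and the constant $c_\epsilon = 2[\chi_0]\cdot[\omega_\epsilon]/[\chi_0]^2$ depends continuously on $\epsilon$ with $c_\epsilon\to c_0$, the convergence hypothesis $c_\epsilon[\chi_0]-[\omega_\epsilon]>0$ persists for all sufficiently small $\epsilon$. The results of \cite{Ch2} and \cite{W1} then supply, for each such $\epsilon$, a smooth solution $\varphi_\epsilon(t)$ of the non-degenerate J-flow $\ddt \varphi_\epsilon = c_\epsilon - \tr{\chi_{\varphi_\epsilon}}{\omega_\epsilon}$ that exists for all time and converges smoothly to a solution of the associated critical equation. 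Everything then rests on estimates for $\varphi_\epsilon$ that are uniform in $\epsilon$.

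The a priori estimates I would establish are three. (i) A uniform bound on $\ddt \varphi_\epsilon$: differentiating the flow shows $\ddt(\ddt \varphi_\epsilon) = L_\epsilon(\ddt \varphi_\epsilon)$, where $L_\epsilon u = \chi_{\varphi_\epsilon}^{i\bar l}\chi_{\varphi_\epsilon}^{k\bar j}(\omega_\epsilon)_{i\bar j}u_{k\bar l}$ is a second-order operator (elliptic for each fixed $\epsilon$, degenerating as $\epsilon\to0$); the maximum principle gives that $\sup_M|\ddt \varphi_\epsilon|$ is non-increasing, hence controlled by its value at $t=0$, which converges to $\sup_M|c_0 - \tr{\chi_{\varphi_0}}{\omega_0}|<\infty$. (ii) A uniform-in-$t$-and-$\epsilon$ bound on $\|\varphi_\epsilon\|_{L^\infty}$, adapting the $C^0$ estimate of \cite{FLSW} for the near-degenerate J-flow together with (\ref{condition}). (iii) The crucial weighted second-order estimate, which is where (\ref{omega0}) enters. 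Here I would run a maximum-principle argument on a quantity of the form $\log \tr{\chi_0}{\chi_{\varphi_\epsilon}} - A\varphi_\epsilon + \gamma\log|s|^2_H$. Since $dd^c\log|s|^2_H = -R_H$ away from $D$, the hypothesis $\omega_0 - \rho R_H \geq \frac{1}{C_0}\hat\omega$ supplies a strictly positive effective reference form that dominates the bad curvature and gradient terms, while $\omega_0 \geq \frac{1}{C_0}|s|^{2\beta}_H\hat\omega$ governs the rate at which $L_\epsilon$ degenerates; the weight $\gamma\log|s|^2_H\to-\infty$ on $D$ forces the maximum into $M\setminus D$. The output is an estimate of the shape $\tr{\chi_0}{\chi_{\varphi_\epsilon}} \leq C|s|^{-2\gamma}_H$, uniform in $\epsilon$ and $t$. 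I expect this weighted $C^2$ estimate to be the main obstacle: the exponents $\beta,\rho,\gamma$ must be balanced so that the degeneracy of $\omega_\epsilon$ is exactly absorbed by the singular weight, and the constant must not blow up as $\epsilon\to0$.

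With (iii) in hand, on any compact $K\Subset M\setminus D$ the weight $|s|^2_H$ is bounded below, so $\chi_{\varphi_\epsilon}$ is uniformly equivalent to $\chi_0$ on $K$ and the flow is uniformly parabolic there. Local Evans--Krylov estimates followed by parabolic Schauder theory then give uniform $C^\infty_{\mathrm{loc}}(M\setminus D)$ bounds, again independent of $\epsilon$. Combined with the uniform $L^\infty$ bound and the fact that each $\varphi_\epsilon(t)\in\mathrm{PSH}_{\chi_0}(M)$, a diagonal compactness argument lets me extract a limit $\varphi(t)$ as $\epsilon\to0$ lying in $\mathcal{P}^{\mathrm{weak}}_{\chi_0}$, smooth on $M\setminus D$, with $\sup_{M\setminus D}|\ddt \varphi|$ bounded, and solving (\ref{jflowdegen}).

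Finally I would treat uniqueness and the long-time limit. The operator $\chi_\varphi\mapsto -\tr{\chi_\varphi}{\omega_0}$ is concave in the complex Hessian of $\varphi$ (as $A\mapsto A^{-1}$ is operator convex and $\omega_0\geq0$), so the difference of two weak solutions satisfies a linear degenerate-parabolic differential inequality; a maximum-principle argument, once more using the barrier $\gamma\log|s|^2_H$ to push the extremum off $D$, yields uniqueness of the flow. For convergence as $t\to\infty$, I would use that the J-flow is the gradient flow of Chen's functional $\mathcal{J}$, which is monotone along the flow and bounded below under (\ref{condition}); this forces $\ddt \varphi\to0$ along a sequence of times, and since $\sup_{M\setminus D}|\ddt \varphi|$ is non-increasing it then tends to $0$ uniformly. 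The local higher-order estimates give $C^\infty_{\mathrm{loc}}(M\setminus D)$ subsequential convergence to some $\varphi_\infty\in\mathcal{P}^{\mathrm{weak}}_{\chi_0}$ solving (\ref{criteq}). Applying the same concavity-plus-barrier maximum principle to the difference of two solutions of (\ref{criteq}) shows $\varphi_\infty$ is unique up to an additive constant, which in turn upgrades the subsequential convergence to full convergence.
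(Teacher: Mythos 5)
Your existence scheme is essentially the paper's: the same regularization $\omega_\ve=\omega_0+\ve\hat\omega$, the same three estimates (maximum principle applied to $\dot\varphi_\ve$; the $L^\infty$ bound via Chen's trick of solving an auxiliary complex Monge--Amp\`ere equation by Yau's theorem, as in \cite{FLSW}; a Tsuji-type weighted second-order estimate), followed by local parabolic theory and a diagonal limit. Two details for comparison: the paper's test quantity is $\log \tr{\hat{\omega}}{\chi} - A\tilde\varphi + (\tilde\varphi+C_0)^{-1}$ with $\tilde\varphi=\varphi-\delta\log|s|^2_H$, i.e.\ yours plus a Phong--Sturm term $1/(\tilde\varphi+C_0)$ whose role is to absorb the bad gradient cross-terms; and your flow-uniqueness argument (barrier $\delta\log|s|^2_H$ plus parabolic maximum principle, as in \cite{ST}) is also the paper's, where one must invoke the hypothesis $\sup_{M\setminus D}|\dot\varphi|,\ \sup_{M\setminus D}|\dot\psi|\le C$ to get $\chi_\varphi,\chi_\psi\ge \frac{1}{C}\omega_0$, hence bound the linearized coefficients by $g_0^{-1}$, so that the second condition in (\ref{omega0}) makes the barrier error of size $O(\delta)$. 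The genuine problems are in your endgame.

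First, uniqueness of $\varphi_\infty$ up to a constant cannot be obtained by your ``concavity-plus-barrier maximum principle.'' That mechanism works parabolically only because the maximum principle propagates the initial bound forward in time, $\theta_\delta(t)\ge \min_M\theta_\delta(0)-C\delta t$; elliptically there is nothing to propagate. Concretely, if $w=\varphi_1-\varphi_2$ satisfies $Lw=0$ on $M\setminus D$ (with $L$ the linearized operator, which has no zeroth-order term) and you set $\theta_\delta=w-\delta\log|s|^2_H$ so that the minimum occurs at an interior point $p_\delta$, the second-derivative test there yields only $0=Lw(p_\delta)\ge -C\delta$, which is vacuous: it gives no lower bound on the value $\theta_\delta(p_\delta)$, which is what you need, and the barrier error $\delta\,\tau\cdot R_H$ has no favorable sign to exploit. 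The paper's route is different: any solution of (\ref{criteq}) in $\mathcal{P}^{\textrm{weak}}_{\chi_0}$ satisfies the degenerate complex Monge--Amp\`ere equation $(\alpha_0+c_0 dd^c\varphi_\infty)^2=\omega_0^2$ with $\alpha_0=c_0\chi_0-\omega_0$, and uniqueness up to constants of bounded pluripotential solutions is Ko{\l}odziej's theorem \cite[Corollary 4.2]{K2} --- a comparison-principle result of pluripotential theory, not a pointwise maximum principle. You would need to quote that result (or reproduce an integration-by-parts comparison argument); your sketch as stated fails.

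Second, ``uniqueness up to an additive constant upgrades subsequential to full convergence'' is a non sequitur: distinct time-subsequences could a priori converge to $\varphi_\infty+c_1$ and $\varphi_\infty+c_2$ with $c_1\ne c_2$, and the monotone functional $\mathcal{J}$ cannot exclude this because $\mathcal{J}$ is invariant under $\varphi\mapsto\varphi+c$ (immediate from the explicit formula and the definition of $c_0$). The paper pins down the constant with the functional $\mathcal{I}_{\omega_0,\chi_0}(\varphi)=\frac{1}{3}\int_M\varphi(\chi_\varphi^2+\chi_\varphi\wedge\chi_0+\chi_0^2)$, which is exactly conserved along the flow, since $\frac{d}{dt}\mathcal{I}=\int_M\dot\varphi\,\chi_\varphi^2=c_0[\chi_0]^2-2[\chi_0]\cdot[\omega_0]=0$, whereas $\mathcal{I}(\varphi_\infty+c)=\mathcal{I}(\varphi_\infty)+c[\chi_0]^2$ is strictly monotone in $c$; this forces all subsequential limits to coincide. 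Relatedly, your claim that $\sup_{M\setminus D}|\dot\varphi|$ is non-increasing for the limit flow (hence that $\dot\varphi\to 0$ uniformly on $M\setminus D$) is unjustified --- the monotonicity valid for each $\ve>0$ survives the limit only in a $\liminf$ sense, and the supremum could concentrate near $D$ --- but it is also unnecessary: decay of $\dot\varphi$ in $C^\infty$ on compact subsets of $M\setminus D$, obtained as in \cite{FLSW} from the energy inequality $\frac{d}{dt}\mathcal{J}=-\int_{M\setminus D}\dot\varphi^2\chi_\varphi^2\le 0$ together with the uniform local estimates, is all the theorem asserts and all the argument requires.
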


Note  that $\varphi_{\infty}$ coincides with the pluripotential solution of (\ref{criteq}) on $M$ in the sense of Bedford-Taylor (see \cite{K3}, for example).

We recall now the $\mathcal{J}$-functional of Chen \cite{Ch1}.  
Given a K\"ahler form $\chi_0$ and a closed $(1,1)$ form $\omega_0$, define the $\mathcal{J}$-functional by
\begin{equation} \label{Jfunctional}
\mathcal{J}_{\omega_0, \chi_0}(\varphi) = \int_0^1 \int_M \dot{\varphi}_s (2\chi_{\varphi_s} \wedge \omega_0 - c_0 \chi_{\varphi_s}^2 ) ds, \quad \textrm{for } \varphi \in \mathcal{P}_{\chi_0},
\end{equation}
where $\varphi_s$ is a smooth path in $\mathcal{P}_{\chi_0}$ between 0 and $\varphi$.  In the case where $\omega_0$ is K\"ahler, the J-flow is the gradient flow of the $\mathcal{J}$-functional.   A consequence of our main result is that the $\mathcal{J}$-functional is uniformly bounded from below on the space $\mathcal{P}_{\chi_0}$.


\begin{corollary} \label{corollarylb}
As in Theorem \ref{maintheorem}, let $M$ be a compact K\"ahler surface with K\"ahler metrics $\chi_0$ and $\hat{\omega}$.  Assume that $\omega_0$ satisfies (\ref{omega0}) and that (\ref{condition}) holds.  Then there exists a constant $K$ depending only on the fixed data $M, \omega_0, \chi_0$ such that 
\begin{equation}
\mathcal{J}_{\omega_0, \chi_0} (\varphi) \ge K, \quad \textrm{for all } \varphi \in \mathcal{P}_{\chi_0}.
\end{equation}
\end{corollary}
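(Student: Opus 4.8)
The plan is to mimic the smooth theory, where the J-flow is the gradient flow of $\mathcal{J}_{\omega_0,\chi_0}$: I will show that this functional is non-increasing along the degenerate flow of Theorem \ref{maintheorem}, and that its limiting value along the flow is a universal constant equal to $\mathcal{J}_{\omega_0,\chi_0}(\varphi_\infty)$. The first task is to make sense of $\mathcal{J}_{\omega_0,\chi_0}$ on the weak class $\mathcal{P}^{\textrm{weak}}_{\chi_0}$. Integrating the defining formula (\ref{Jfunctional}) in $s$ produces the path-independent expression
\[ \mathcal{J}_{\omega_0,\chi_0}(\varphi) = \int_M \varphi\,(\chi_0+\chi_\varphi)\wedge\omega_0 - \frac{c_0}{3}\sum_{j=0}^2\int_M \varphi\,\chi_0^{2-j}\wedge\chi_\varphi^j, \]
which one checks by differentiation and integration by parts (only $\omega_0$ closed, not positive, is used). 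For $\varphi\in\mathcal{P}^{\textrm{weak}}_{\chi_0}$ the potential $\varphi$ is bounded and $\chi_0$-plurisubharmonic, so the mixed currents $\chi_0^{2-j}\wedge\chi_\varphi^{j}$ and $\chi_\varphi\wedge\omega_0$ are well-defined Bedford-Taylor products that charge no pluripolar set; hence the right-hand side extends $\mathcal{J}_{\omega_0,\chi_0}$ to $\mathcal{P}^{\textrm{weak}}_{\chi_0}$, agreeing with (\ref{Jfunctional}) on $\mathcal{P}_{\chi_0}$, and is continuous along uniformly convergent sequences of bounded potentials. A short computation using that $\int_M 2\chi_\varphi\wedge\omega_0 = \int_M c_0\chi_\varphi^2 = 2[\chi_0]\cdot[\omega_0]$ shows the extended functional is invariant under $\varphi\mapsto\varphi+\mathrm{const}$ (this is exactly what the normalization of $c_0$ enforces).

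Next I would establish monotonicity. Fix $\varphi_0\in\mathcal{P}_{\chi_0}$ and let $\varphi(t)$ be the solution from Theorem \ref{maintheorem}. Differentiating and substituting the flow equation (\ref{jflowdegen}), valid on $M\setminus D$, gives
\[ \frac{d}{dt}\mathcal{J}_{\omega_0,\chi_0}(\varphi(t)) = \int_{M\setminus D}\dot\varphi\,(2\chi_\varphi\wedge\omega_0-c_0\chi_\varphi^2) = -\int_{M\setminus D}\dot\varphi^2\,\chi_\varphi^2 \le 0, \]
where replacing $\int_{M\setminus D}$ by $\int_M$ is legitimate because $\chi_\varphi^2$ puts no mass on the pluripolar set $D$, and where the uniform bound on $\sup_{M\setminus D}|\partial\varphi/\partial t|$ guarantees the integral is finite. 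Integrating in $t$ yields $\mathcal{J}_{\omega_0,\chi_0}(\varphi_0)\ge\mathcal{J}_{\omega_0,\chi_0}(\varphi(t))$ for every $t\ge 0$.

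Finally I would pass to the limit. Using the convergence $\varphi(t)\to\varphi_\infty$ from Theorem \ref{maintheorem} together with the continuity of the extended functional, I obtain $\mathcal{J}_{\omega_0,\chi_0}(\varphi(t))\to\mathcal{J}_{\omega_0,\chi_0}(\varphi_\infty)$, whence $\mathcal{J}_{\omega_0,\chi_0}(\varphi_0)\ge\mathcal{J}_{\omega_0,\chi_0}(\varphi_\infty)=:K$. Since $\varphi_\infty$ solves the critical equation (\ref{criteq}) and such a solution is unique up to an additive constant, while $\mathcal{J}_{\omega_0,\chi_0}$ is constant-invariant, the number $K$ does not depend on the initial data and is a function of $M,\omega_0,\chi_0$ alone. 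As $\varphi_0\in\mathcal{P}_{\chi_0}$ was arbitrary, this is precisely the assertion.

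The main obstacle is this last passage to the limit. Theorem \ref{maintheorem} supplies only $C^{\infty}_{\textrm{loc}}(M\setminus D)$ convergence together with uniform $L^\infty$ bounds, whereas the continuity of the Bedford-Taylor products in the formula for $\mathcal{J}_{\omega_0,\chi_0}$ requires genuine uniform (or at least capacity) convergence of $\varphi(t)$ to $\varphi_\infty$ \emph{across} $D$. I would upgrade the local smooth convergence to uniform convergence on all of $M$ by combining the global control from the uniform bound on $\sup_{M\setminus D}|\partial\varphi/\partial t|$ with the $L^\infty$ estimate near $D$ inherent in $\mathcal{P}^{\textrm{weak}}_{\chi_0}$. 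A secondary technical point—the absolute continuity of $t\mapsto\mathcal{J}_{\omega_0,\chi_0}(\varphi(t))$ needed to justify the fundamental theorem of calculus for a merely weak solution—is handled by the same estimates.
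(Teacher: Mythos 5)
Your overall strategy is exactly the paper's: extend $\mathcal{J}_{\omega_0,\chi_0}$ to $\mathcal{P}^{\textrm{weak}}_{\chi_0}$ via the path-independent formula (\ref{Jfunctional2}), show it is non-increasing along the degenerate flow of Theorem \ref{maintheorem} using $\frac{d}{dt}\mathcal{J}_{\omega_0,\chi_0}(\varphi(t)) = -\int_{M\setminus D}\dot{\varphi}^2\chi_{\varphi(t)}^2 \le 0$, and conclude $\mathcal{J}_{\omega_0,\chi_0}(\varphi_0) \ge \lim_{t\to\infty}\mathcal{J}_{\omega_0,\chi_0}(\varphi(t)) = \mathcal{J}_{\omega_0,\chi_0}(\varphi_\infty) =: K$, with $K$ universal because $\varphi_\infty$ is unique up to an additive constant and $\mathcal{J}_{\omega_0,\chi_0}$ is invariant under adding constants (a point the paper leaves implicit; your explicit verification using the definition of $c_0$ is correct and worth stating).

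The problem is the step you yourself identify as the main obstacle, namely justifying $\mathcal{J}_{\omega_0,\chi_0}(\varphi(t)) \to \mathcal{J}_{\omega_0,\chi_0}(\varphi_\infty)$. Your proposed fix — upgrading the $C^{\infty}_{\textrm{loc}}(M\setminus D)$ convergence to \emph{uniform} convergence on all of $M$ by "combining the uniform bound on $\sup_{M\setminus D}|\dot{\varphi}|$ with the $L^\infty$ estimate" — does not work as stated. The bound on $|\dot{\varphi}|$ gives equi-Lipschitz dependence on $t$, and the $L^\infty$ bound gives equiboundedness, but neither provides any equicontinuity in the \emph{space} variable near $D$: the estimates of Proposition \ref{mainprop} degenerate like negative powers of $|s|_H$ there, so no Arzel\`a--Ascoli-type mechanism forces uniform convergence across $D$, and nothing in the flow construction supplies one. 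What is actually needed is weaker, and it is precisely the capacity convergence you mention parenthetically and then abandon: since $D$ is an analytic, hence pluripolar, set, it has zero outer Monge--Amp\`ere capacity, so for each $\delta>0$ the sets $\{|\varphi(t)-\varphi_\infty|>\delta\}$ are eventually contained in open neighborhoods of $D$ of arbitrarily small capacity; thus $\varphi(t)\to\varphi_\infty$ in capacity, and for uniformly bounded $\chi_0$-plurisubharmonic potentials this suffices for convergence of all the mixed Bedford--Taylor terms in (\ref{Jfunctional2}). This is the content of Lemma 3.2 of \cite{FLSW}, which is exactly what the paper cites at this point. With that substitution your argument closes; as written, the limit passage is unproved.
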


This result has an immediate application to the \emph{Mabuchi energy functional}, which we now explain.
A well-known open problem in K\"ahler geometry is to determine which K\"ahler classes on $M$ admit K\"ahler metrics of constant scalar curvature.  The Yau-Tian-Donaldson conjecture relates this to a notion of stability in the sense of geometric invariant theory \cite{Y2,T1,D2}.
 A related question is to ask instead for  which K\"ahler classes is the Mabuchi energy functional  \emph{proper} (see Section \ref{sectionproofs} below for the definition).  Indeed, according to a conjecture of Tian \cite{T2}, these questions are essentially equivalent, modulo some issues which arise if $M$ admits holomorphic vector fields.

It was shown by Chen \cite{Ch1} that if the canonical bundle $K_M$ of $M$ satisfies $K_M>0$ then the Mabuchi energy is bounded below on all K\"ahler classes $[\chi_0]$ satisfying
\begin{equation} \label{assume1}
\left( \frac{2 [\chi_0] \cdot K_M}{[\chi_0]^2} \right) [\chi_0] - K_M >0.
\end{equation}
An alternative proof of this was given by the second-named author using the J-flow \cite{W1}.  Later, the authors observed \cite{SW1} that under the same assumption  (\ref{assume1}), it follows from a result of Tian \cite{T2} that in fact the Mabuchi energy is not just bounded below but proper. 
 Moreover, we proved analogous results  on manifolds $M$ of any dimension with $K_M>0$.   
 
  Fang-Lai-Song-Weinkove \cite{FLSW} recently showed that the assumption (\ref{assume1}) can be weakened to
\begin{equation} \label{assumeFLSW}
\left( \frac{2 [\chi_0] \cdot K_M}{[\chi_0]^2} \right) [\chi_0] - K_M  \ge 0,
\end{equation}
where $\sigma \ge 0$ means that the cohomology class $\sigma$ admits a smooth nonnegative representative.  

In this paper, we instead allow $K_M$ to satisfy a more general nonnegativity condition than being ample.  We assume that $K_M$ is big and nef, meaning that $K_M^2 >0$ and $K_M\cdot C \ge 0$ for all curves $C$ on $M$.
 This is equivalent to saying that $M$ is a minimal surface of general type.

\begin{corollary} \label{mabuchithm}
Let $M$ be a minimal surface of general type.  Then the Mabuchi energy is proper on all K\"ahler classes  $[\chi_0]$ satisfying
\begin{equation} \label{assume2}
\left( \frac{2 [\chi_0] \cdot K_M}{[\chi_0]^2} \right) [\chi_0] - K_M >0.
\end{equation}
\end{corollary}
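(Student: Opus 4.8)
The plan is to deduce Corollary \ref{mabuchithm} from the lower bound on the $\mathcal{J}$-functional in Corollary \ref{corollarylb}, together with Chen's formula for the Mabuchi energy. Recall that the Mabuchi energy $\mathrm{Mab}$ is \emph{proper} on $\mathcal{P}_{\chi_0}$ if there is $f\colon[0,\infty)\to\mathbb{R}$ with $f(x)\to\infty$ as $x\to\infty$ such that $\mathrm{Mab}(\varphi)\ge f(J_{\chi_0}(\varphi))$ for all $\varphi$, where $J_{\chi_0}$ is Aubin's functional. The first step is the geometric input hinted at after (\ref{omega0}): since $M$ is a minimal surface of general type, $K_M$ is big and nef, hence semiample, and the pluricanonical map $\Phi\colon M\to M_{\mathrm{can}}\subset\mathbb{P}^N$ contracts precisely the finitely many $(-2)$-curves; moreover $\mathrm{Aut}(M)$ is finite, so $M$ carries no nonzero holomorphic vector fields. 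Pulling back a Fubini--Study metric gives a smooth closed nonnegative form $\omega_0=\tfrac1m\,\Phi^{*}\ofs$ representing $2\pi c_1(K_M)$ and degenerating exactly along the contracted curves; taking $D$ supported on these curves and choosing $H,s,\beta,\rho$ by a local analysis of $\Phi$ (as in \cite{SW2}), one verifies that $\omega_0$ satisfies (\ref{omega0}). With this choice $[\omega_0]=K_M$, so (\ref{assume2}) is exactly the hypothesis (\ref{condition}), and Corollary \ref{corollarylb} applies to give $\mathcal{J}_{\omega_0,\chi_0}(\varphi)\ge K$.

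Next I would invoke Chen's formula \cite{Ch1}, which on a K\"ahler surface expresses the Mabuchi energy, up to an additive constant, as
\[
\mathrm{Mab}(\varphi)=\mathrm{Ent}(\varphi)+\tfrac1V\,\mathcal{J}_{\theta_0,\chi_0}(\varphi),\qquad \mathrm{Ent}(\varphi):=\tfrac1V\!\int_M\log\!\Big(\tfrac{\chi_\varphi^{2}}{\chi_0^{2}}\Big)\chi_\varphi^{2},
\]
where $V=[\chi_0]^{2}$ and $\theta_0$ is a fixed smooth representative of $2\pi c_1(K_M)$, chosen so that the constant $c_0$ attached to $\mathcal{J}_{\theta_0,\chi_0}$ is that of (\ref{assume2}). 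The entropy term is nonnegative by Jensen's inequality, since $\chi_\varphi^{2}$ and $\chi_0^{2}$ have equal mass $V$. Because $\theta_0$ and $\omega_0$ are cohomologous smooth forms, $\theta_0=\omega_0+dd^c\psi$ for some $\psi\in C^{\infty}(M)$, and integration by parts in the definition (\ref{Jfunctional}) gives $\mathcal{J}_{\theta_0,\chi_0}(\varphi)=\mathcal{J}_{\omega_0,\chi_0}(\varphi)+\int_M\psi(\chi_\varphi^{2}-\chi_0^{2})$, a change bounded in absolute value by $2V\sup_M|\psi|$. Combining this with Corollary \ref{corollarylb} and $\mathrm{Ent}\ge0$ shows immediately that $\mathrm{Mab}$ is bounded below on $\mathcal{P}_{\chi_0}$.

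The remaining, and hardest, step is to upgrade this lower bound to properness, using the strictness of (\ref{assume2}); here I would follow \cite{SW1} and a result of Tian \cite{T2}. The model mechanism uses the linearity of $\mathcal{J}_{\cdot,\chi_0}$ in its weight: writing $\mathcal{J}_{\omega_0,\chi_0}=\mathcal{J}_{\omega_0-\varepsilon\chi_0,\chi_0}+\varepsilon\,\mathcal{J}_{\chi_0,\chi_0}$, one computes that $\mathcal{J}_{\chi_0,\chi_0}$ is a nonnegative coercive functional comparable to $J_{\chi_0}$ (up to normalization, $\mathcal{J}_{\chi_0,\chi_0}(\varphi)=\tfrac13\int_M d\varphi\wedge d^{c}\varphi\wedge\chi_0+\tfrac23\int_M d\varphi\wedge d^{c}\varphi\wedge\chi_\varphi$), so an $\varepsilon$-fraction of it supplies the growth needed for properness once $\mathcal{J}_{\omega_0-\varepsilon\chi_0,\chi_0}$ is shown to be bounded below. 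I expect this last point to be the main obstacle, and the essential difference from the ample case treated in \cite{SW1}: since $\omega_0$ degenerates along the $(-2)$-curves, the form $\omega_0-\varepsilon\chi_0$ is no longer nonnegative — indeed $K_M-\varepsilon[\chi_0]$ is negative on the contracted curves and hence leaves the nef cone — so Corollary \ref{corollarylb} does not apply to it as stated. The route I would pursue is to note that the first inequality in (\ref{omega0}) forces $\omega_0\ge c\,\chi_0$ away from an arbitrarily thin tube around $D$, so that $\omega_0-\varepsilon\chi_0$ fails to be nonnegative only on this tube; one then extends the lower bound of Corollary \ref{corollarylb} to weights that are negative in a small neighborhood of $D$, exploiting that $D$ is a proper analytic subset, to conclude that $\mathcal{J}_{\omega_0-\varepsilon\chi_0,\chi_0}$ is bounded below and hence that $\mathrm{Mab}$ is proper.
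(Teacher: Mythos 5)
Your geometric setup (pulling back a Fubini--Study metric by the pluricanonical map, verifying (\ref{omega0}), identifying (\ref{assume2}) with (\ref{condition})) and your deduction that the Mabuchi energy is \emph{bounded below} --- Chen's formula, Jensen's inequality for the entropy term, Corollary \ref{corollarylb}, and the bounded change of $\mathcal{J}$ under a change of representative --- all agree with the paper. The genuine gap is in your final step, which you correctly flag as the crux but do not prove: your route needs $\mathcal{J}_{\omega_0-\varepsilon\chi_0,\chi_0}$ to be bounded below, and your proposed fix (extending Corollary \ref{corollarylb} to ``weights that are negative in a small neighborhood of $D$'') cannot work as described, because the obstruction is cohomological, not local. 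By adjunction, every curve $C$ contracted by the pluricanonical map is a $(-2)$-curve with $K_M\cdot C=0$, so $(K_M-\varepsilon[\chi_0])\cdot C=-\varepsilon[\chi_0]\cdot C<0$: the class $K_M-\varepsilon[\chi_0]$ admits \emph{no} nonnegative representative at all (a nonnegative form has nonnegative integral over $C$), so no choice of representative, Hermitian metric $H$, or tube around $D$ can restore a condition of type (\ref{omega0}). The entire estimate scheme of Section \ref{sectionjflow} rests on that condition: the perturbed forms $\omega_{\varepsilon'}$ would no longer be K\"ahler, so Chen's existence theorem for (\ref{Jflowve}) does not apply; the Monge--Amp\`ere comparison (\ref{yau}) needs $\omega_{\varepsilon'}^2\ge 0$ as a right-hand side; and Lemma \ref{lemmalbchi} gives $\chi_{\varphi_{\varepsilon'}}\ge \omega_{\varepsilon'}/C'$, which is useless when $\omega_{\varepsilon'}$ is not nonnegative. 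Note that your $\varepsilon$-decomposition \emph{is} a legitimate strategy in the ample case of \cite{SW1}, where $\omega_0-\varepsilon\chi_0$ stays K\"ahler and the cone condition is open; it is precisely the degeneracy of $K_M$ along the $(-2)$-curves that kills it here, and lower bounds for $\mathcal{J}_{\omega,\chi_0}$ when $[\omega]$ leaves the nef cone are a J-stability-type problem that neither this paper nor its references resolve.

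The paper sidesteps all of this: in Chen's formula $\mathcal{M}_{\chi_0}=\mathcal{J}_{\omega_0,\chi_0}+\mathcal{F}$, the companion term $\mathcal{F}$ (the entropy part) is not merely nonnegative but \emph{proper} on $\mathcal{P}_{\chi_0}$, by a theorem of Tian \cite{T2} proved via the $\alpha$-invariant (this is the same ingredient used in \cite{SW1}, and it holds on any compact K\"ahler manifold, with no positivity hypothesis on $K_M$). Hence properness of the Mabuchi energy is immediate from Corollary \ref{corollarylb} alone: $\mathcal{M}_{\chi_0}\ge K+\mathcal{F}$, and $\mathcal{F}$ already dominates an increasing unbounded function of the Aubin--Yau energy $E_{\chi_0}$. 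In other words, the strictness of (\ref{assume2}) is used only so that (\ref{condition}) holds and Corollary \ref{corollarylb} applies; the growth does not need to be extracted from $\mathcal{J}$ at all. By estimating the entropy only with Jensen's inequality, you discarded exactly the ingredient that supplies properness, and were then forced into the problematic decomposition above.
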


Thus one would expect constant scalar curvature K\"ahler metrics to exist in these classes.  It is also expected that such classes are \emph{K-stable} in the sense of \cite{T1, D2}.  In fact, it follows immediately from results of Panov-Ross (see the argument of \cite[Example 5.9]{PR})  that algebraic classes satisfying (\ref{assume2}) are \emph{slope stable} in the sense of Ross-Thomas  \cite{RT}.

The main technical results are contained in Section \ref{sectionjflow}.  The idea is to replace the degenerate $(1,1)$-form $\omega_0$ with a K\"ahler form $\omega_{\ve}$ for $\ve>0$ and obtain estimates for the J-flow away from $D$ which are independent of $\ve$.  As $\ve \rightarrow 0$, we have $\omega_{\ve} \rightarrow \omega_0$ and we obtain a solution of the degenerate J-flow (cf. results of Song-Tian \cite{ST} in the case of the K\"ahler-Ricci flow).  The  key estimates are contained in Proposition \ref{mainprop}.  In Section \ref{sectionproofs}, we prove Theorem \ref{maintheorem} and its corollaries.

Finally, some words about notation.  When we are given a (1,1) form $\beta$, we will often define a tensor with components $\beta_{i\ov{j}}$ by $\beta = \sqrt{-1} \beta_{i\ov{j}}dz^i \wedge d\ov{z^j}$.  An exception to this notation is that for a  K\"ahler form $\omega$ we write $\omega = \sqrt{-1} g_{i\ov{j}}dz^i \wedge d\ov{z^j}$, and similarly for $\omega_0, g_0$, etc.  Given a positive definite $(1,1)$ form $\alpha$ and a $(1,1)$ form $\beta$, we write $\tr{\alpha}{\beta}$ for $\alpha^{\ov{j}i} \beta_{i\ov{j}}$, where $(\alpha^{\ov{j}i})$ is the inverse of $(\alpha_{i\ov{j}})$.  We will often denote uniform constants by $C, C_0, C_1, C', C'', \ldots$ etc., which may differ from line to line.

The authors thank the referee for some helpful comments and suggestions.

\section{Estimates for solutions of the J-flow} \label{sectionjflow}

\setcounter{equation}{0}

The degenerate J-flow (\ref{jflowdegen}) defined in the introduction is not a parabolic equation. We peturb the equation to make it parabolic.  

Assume we are in the setting of Theorem \ref{maintheorem}.
 Write $\omega_{\ve} = \omega_0 + \ve \hat{\omega} >0$ and define
\begin{equation} \label{ce}
c_{\ve} := 2 \frac{[\chi_0] \cdot [\omega_{\ve}]}{[\chi_0]^2}. 
\end{equation}
As $\ve \rightarrow 0$,  $c_\ve \rightarrow c_0$. From (\ref{condition}), we may choose $\ve_0>0$ sufficiently small so that for $\ve \in [0, \ve_0]$ we have
\begin{equation} \label{cohomve}
c_{\ve}  [\chi_0] - [\omega_{\ve}]>0.
\end{equation}
Then consider the family of J-flows
\begin{equation} \label{Jflowve}
\ddt{} \varphi_{\ve} = c_{\ve} - \frac{2\chi_{\varphi} \wedge \omega_{\ve}}{\chi_{\varphi}^2},
 \qquad \varphi_{\ve}|_{t=0} = \varphi_0,
\end{equation}
parametrized by $\ve \in (0, \ve_0]$.
By Chen's result \cite{Ch2}, we know that there exists a solution to  (\ref{Jflowve}) on $M \times [0,\infty)$.  The main result of this section is that we have  uniform (independent of $\ve$) $L^{\infty}$ bounds for $\varphi_{\ve}$, $\dot{\varphi}_{\ve}$ and $C^{\infty}$ estimates for $\varphi_{\ve}$ away  from $D$.   

\begin{proposition} \label{mainprop}
In the setting described above,  there exists a uniform constant $C$ such that for all $t$ and $\ve \in (0, \ve_0]$,
\begin{equation}
\| \varphi_{\ve} \|_{L^{\infty}(M )} \le C \quad \textrm{and} \quad \| \dot{\varphi}_{\ve} \|_{L^{\infty}(M )} \le C.
\end{equation}
Moreover, for any compact set $\Omega$ of $M\setminus D$ and $k>0$, there exists $C_{\Omega, k}>0$ such that  
\begin{equation}\label{ckest}
||\varphi_\ve||_{C^k(\Omega, \chi_0)} \leq C_{\Omega, k}.
\end{equation}

\end{proposition}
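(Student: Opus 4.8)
The goal is to establish $\ve$-independent estimates for the J-flow $(\ref{Jflowve})$ with the perturbed K\"ahler form $\omega_\ve = \omega_0 + \ve\hat\omega$. The crucial point is that the lower bound $\omega_0 \ge \frac{1}{C_0}|s|_H^{2\beta}\hat\omega$ in $(\ref{omega0})$ degenerates only along $D$, so the local interior estimates must be allowed to blow up near $D$ while the global $L^\infty$ bounds on $\vp_\ve$ and $\dot\vp_\ve$ remain uniform.

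The natural order of attack is as follows.

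First I would establish the uniform $L^\infty$ bound on $\dot\vp_\ve$. The standard tool is the evolution equation for $\dot\vp_\ve$: differentiating $(\ref{Jflowve})$ in $t$ shows that $\dot\vp_\ve$ satisfies a linear parabolic equation of the form $\partial_t \dot\vp_\ve = \Delta_{\tilde g}\dot\vp_\ve$ for a suitable positive-definite metric $\tilde g$ built from $\chi_\vp$ and $\omega_\ve$, and the maximum principle then gives $\sup_M|\dot\vp_\ve(t)| \le \sup_M|\dot\vp_\ve(0)|$. The initial bound $\|\dot\vp_\ve(0)\|_{L^\infty}$ is controlled uniformly in $\ve$ because $\vp_0$ is a fixed smooth function and $c_\ve \to c_0$, $\omega_\ve \to \omega_0$; one should check that the quantity $2\chi_{\vp_0}\wedge\omega_\ve/\chi_{\vp_0}^2$ stays bounded as $\ve\to0$, which it does since $\omega_\ve \le \omega_0 + \ve_0\hat\omega$ is uniformly bounded above and $\chi_{\vp_0}$ is a fixed positive form.

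Next I would obtain the uniform $L^\infty$ bound on $\vp_\ve$ itself. Since $\dot\vp_\ve$ is uniformly bounded, $\vp_\ve(t)$ could in principle drift linearly in $t$; the cohomological normalization hidden in the choice of $c_\ve$ (via $(\ref{ce})$ and $(\ref{cohomve})$) is exactly what prevents this. I expect to use the known $L^\infty$ estimate for the J-flow in the nondegenerate setting---which under the open condition $(\ref{cohomve})$ follows from the techniques of \cite{W1} or a Moser iteration / pluripotential argument---applied to $\omega_\ve$, and then verify that the resulting bound depends on $\omega_\ve$ only through $C_0$ and $\beta$ in $(\ref{omega0})$ rather than through $\ve$. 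The lower bound $\omega_\ve - \rho R_H \ge \frac1{C_0}\hat\omega$ is what makes this uniform, since it is stable as $\ve \to 0$.

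Finally, for the interior $C^k$ estimates $(\ref{ckest})$ on a compact $\Omega \Subset M\setminus D$, I would run the standard second-order (Laplacian) estimate for the J-flow. The key computation is the evolution inequality for $\log\tr{\chi_\vp}{\chi_0}$ (or equivalently $\tr{\chi_0}{\chi_\vp}$), which under the cone condition $c_\ve[\chi_0]-[\omega_\ve]>0$ produces a good negative term as in \cite{W1,W2}; combined with the lower bound $\omega_\ve \ge \frac{1}{C_0}|s|_H^{2\beta}\hat\omega$, one gets a differential inequality whose coefficients are controlled on $\Omega$ in terms of $\inf_\Omega|s|_H^{2\beta}>0$. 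Applying the maximum principle together with the $L^\infty$ bounds on $\vp_\ve,\dot\vp_\ve$ yields a uniform bound on $\tr{\chi_0}{\chi_\vp}$ on $\Omega$, hence $\chi_\vp$ is uniformly equivalent to $\chi_0$ there. Parabolic Evans-Krylov and Schauder theory then bootstrap to all higher $C^k$ bounds on $\Omega$.

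The main obstacle I anticipate is the interior second-order estimate: one must carefully track how the degeneracy of $\omega_\ve$ near $D$ enters the maximum principle argument and confirm that restricting to $\Omega \Subset M\setminus D$ (where $|s|_H^{2\beta}$ is bounded below) genuinely decouples the estimate from $\ve$ and from the behavior near $D$. The positivity of the cone condition $(\ref{cohomve})$ supplies the crucial good term, but one needs a uniform gap---an $\ve$-independent lower bound coming from $(\ref{condition})$ and the choice of $\ve_0$---to make the negative term dominate uniformly. Handling the cross terms involving derivatives of $|s|_H^{2\beta}$, which are controlled but not small on $\Omega$, is the delicate bookkeeping step.
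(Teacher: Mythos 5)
Your first step (differentiating the flow and applying the maximum principle to $\dot{\varphi}_{\ve}$, with uniformity at $t=0$ coming from $\omega_{\ve} \le \omega_0 + \ve_0 \hat{\omega}$) is exactly the paper's argument, and it also yields the consequence $\chi_{\varphi_{\ve}} \ge \frac{1}{C'}\omega_{\ve}$, which is needed later. Your second step, however, is a pointer rather than an argument: ``apply the known nondegenerate $L^{\infty}$ estimate and check it depends only on $C_0,\beta$'' omits the mechanism. The paper's mechanism is Chen's trick: for each $\ve$ solve the complex Monge--Amp\`ere equation $(\alpha_{\ve} + c_{\ve} dd^c \psi_{\ve})^2 = \omega_{\ve}^2$ with $\alpha_{\ve} = c_{\ve}\chi_0 - \omega_{\ve}$, solvable by Yau since $[\alpha_{\ve}]^2 = [\omega_{\ve}]^2$; then $\chi_{\psi_{\ve}}$ is a stationary solution of the critical equation, Yau's Moser-iteration bound gives $\|\psi_{\ve}\|_{L^{\infty}} \le C$ uniformly because $\alpha_{\ve} > \delta \chi_0$ (after adding $dd^c\eta$, using stability of the cone condition) and $\omega_{\ve}^2/\alpha_{\ve}^2 \le C$, and finally $\theta_{\ve} = \varphi_{\ve} - \psi_{\ve}$ satisfies a linear parabolic equation with positive-definite coefficients, so the maximum principle bounds it by its initial value. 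Note that the uniformity comes from the cone condition and from $\omega_{\ve} \le C\hat{\omega}$, not from $\omega_{\ve} - \rho R_H \ge \frac{1}{C_0}\hat{\omega}$ as you suggest (that hypothesis enters only the second-order estimate and the uniqueness argument); also, a Ko{\l}odziej-type pluripotential route is unavailable here, since the $\dot{\varphi}$ bound gives a lower, not upper, bound on $\chi_{\varphi_{\ve}}^2$.

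The genuine gap is in the second-order estimate. You propose to bound $\tr{\chi_0}{\chi_{\varphi_{\ve}}}$ on a fixed $\Omega \Subset M \setminus D$ by applying the maximum principle to an evolution inequality ``whose coefficients are controlled on $\Omega$ in terms of $\inf_{\Omega} |s|_H^{2\beta}$.'' But the maximum principle does not localize: the spatial maximum of your test quantity may occur outside $\Omega$, in particular arbitrarily close to $D$ where your coefficients degenerate, and restricting to $\Omega$ leaves its boundary values uncontrolled; one would need a cutoff-function interior estimate, which you do not supply and which is far from standard for the operator $\tilde{\Delta} = h^{\ov{j}i}\partial_i \partial_{\ov{j}}$ with $h^{\ov{j}i} = \chi^{\ov{j}p}\chi^{\ov{q}i}(g_{\ve})_{p\ov{q}}$. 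The paper's resolution is not an interior estimate at all, but a \emph{global weighted} estimate $\tr{\hat{\omega}}{\chi_{\varphi_{\ve}}} \le C|s|_H^{-2\gamma}$: it applies the maximum principle on all of $M\setminus D$ to $Q = \log \tr{\hat{\omega}}{\chi} - A\tilde{\varphi} + (\tilde{\varphi}+C_0)^{-1}$ with $\tilde{\varphi} = \varphi - \delta \log |s|_H^2$, where Tsuji's barrier $-\delta\log|s|_H^2$ forces $Q \to -\infty$ at $D$ (so the maximum lies in $M\setminus D$ with an $\ve$-independent bound there), and the Phong--Sturm term $(\tilde{\varphi}+C_0)^{-1}$ supplies the negative gradient term $-2|\partial\tilde{\varphi}|_h^2/(\tilde{\varphi}+C_0)^3$ that absorbs cross terms blowing up like $|s|_H^{-4\beta}$. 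A second difficulty you never engage: since the reference metric in the trace must be a fixed metric ($\hat{\omega}$ or $\chi_0$; incidentally $\tr{\chi_{\varphi}}{\chi_0}$ and $\tr{\chi_0}{\chi_{\varphi}}$ are not interchangeable, and it is the latter you must bound) rather than the degenerate $\omega_{\ve}$ appearing in the operator, the classical computation of \cite{W1, W2, SW1} breaks down, and one needs the Cherrier-type completing-the-square identity of Lemma \ref{lemmaevolve} to handle the terms involving $\hat{\nabla} g_{\ve}$, followed by the eigenvalue argument at the maximum point using both parts of hypothesis (\ref{omega0}). Identifying this as ``the delicate bookkeeping step'' is not the same as carrying it out; as written, your third step does not close.
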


In order to prove the proposition, it suffices to establish the uniform $L^\infty$ estimates on $M$ and a second order estimate on $M\setminus D$. 

\begin{lemma} There exists a uniform constant $C$ such that for all $t$ and $\ve \in (0, \ve_0]$,
\begin{equation}\label{c_0est}
\| \varphi_{\ve} \|_{L^{\infty}(M )} \le C.
\end{equation}

\end{lemma}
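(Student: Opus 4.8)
The plan is to bound $\varphi_\ve$ from two sides by separating the control of its oscillation from that of its vertical position. A direct maximum principle applied to $\varphi_\ve$ is not enough: at a spatial maximum $\chi_{\varphi_\ve}\le\chi_0$ forces $\dot\varphi_\ve\le c_\ve$, and at a minimum $\dot\varphi_\ve\ge c_\ve-\tr{\chi_0}{\omega_\ve}$, so the pointwise argument only yields a bound growing like $c_\ve t$. The vertical drift will instead be pinned by a conserved energy. Let $E(\varphi)=\frac{1}{3V}\sum_{j=0}^{2}\int_M\varphi\,\chi_0^{2-j}\wedge\chi_{\varphi}^{\,j}$ be the Monge--Amp\`ere energy relative to $\chi_0$, where $V=[\chi_0]^2$, so that $\frac{d}{dt}E(\varphi_\ve)=\frac1V\int_M\dot\varphi_\ve\,\chi_{\varphi_\ve}^2$. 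Along \eqref{Jflowve},
\[
\frac{d}{dt}E(\varphi_\ve)=\frac1V\int_M\big(c_\ve-\tr{\chi_{\varphi_\ve}}{\omega_\ve}\big)\chi_{\varphi_\ve}^2=\frac1V\big(c_\ve[\chi_0]^2-2[\chi_0]\cdot[\omega_\ve]\big)=0
\]
by the definition \eqref{ce} of $c_\ve$; hence $E(\varphi_\ve(t))\equiv E(\varphi_0)$ is a fixed constant, independent of $\ve$ and $t$.

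For the lower bound I would compare $\varphi_\ve$ with the stationary solution. For fixed $\ve>0$ the form $\omega_\ve$ is K\"ahler, so by Chen's theorem and \eqref{cohomve} there is a smooth $\psi_\ve$, normalized by $\sup_M\psi_\ve=0$, solving $\tr{\chi_{\psi_\ve}}{\omega_\ve}=c_\ve$. Writing $\Theta=\varphi_\ve-\psi_\ve$ and using $\dot\varphi_\ve=\tr{\chi_{\psi_\ve}}{\omega_\ve}-\tr{\chi_{\varphi_\ve}}{\omega_\ve}$ together with the convexity of $A\mapsto\mathrm{tr}(A^{-1}\omega_\ve)$ on positive Hermitian matrices, one obtains the differential inequality $\partial_t\Theta\ge L\Theta$, where $L=\chi_{\varphi_\ve}^{\ov{j}k}\chi_{\varphi_\ve}^{\ov{l}i}(\omega_\ve)_{i\ov{j}}\,\partial_k\partial_{\ov{l}}$ is linear and elliptic with no zeroth-order term. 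The parabolic minimum principle on the closed manifold $M$ gives $\min_M\Theta(t)\ge\min_M\Theta(0)=\min_M(\varphi_0-\psi_\ve)$, so $\varphi_\ve(t)\ge\psi_\ve+\min_M\varphi_0\ge\inf_M\psi_\ve+\min_M\varphi_0$. A uniform lower bound for $\varphi_\ve$ therefore reduces to a uniform (in $\ve$) lower bound for $\psi_\ve$.

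The upper bound is then automatic. Since $\chi_{\varphi_\ve}>0$, the function $\varphi_\ve$ is $\chi_0$-plurisubharmonic and satisfies the standard inequality $\sup_M\varphi_\ve\le\frac1V\int_M\varphi_\ve\,\chi_0^2+C$. Using the conserved energy together with the lower bound $\varphi_\ve\ge-C$ and the positivity of $\chi_0\wedge\chi_{\varphi_\ve}$ and $\chi_{\varphi_\ve}^2$,
\[
\int_M\varphi_\ve\,\chi_0^2=3V\,E(\varphi_0)-\int_M\varphi_\ve\,\chi_0\wedge\chi_{\varphi_\ve}-\int_M\varphi_\ve\,\chi_{\varphi_\ve}^2\le 3V\,E(\varphi_0)+2C[\chi_0]^2,
\]
so $\sup_M\varphi_\ve\le C$, and combined with the lower bound this gives $\|\varphi_\ve\|_{L^\infty(M)}\le C$.

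The crux is the remaining uniform bound on $\psi_\ve$ as $\omega_\ve\to\omega_0$ degenerates along $D$. The flow gives no upper bound on the Monge--Amp\`ere mass $\chi_{\varphi_\ve}^2$ (it controls $\chi_{\varphi_\ve}$ only from below), so a Kolodziej-type $L^\infty$ estimate cannot be applied to $\varphi_\ve$ directly. Instead I would pass through Chen's reduction of the critical equation to a complex Monge--Amp\`ere equation: $c_\ve\psi_\ve$ solves $\big(c_\ve\chi_0-\omega_\ve+dd^c(c_\ve\psi_\ve)\big)^2=\omega_\ve^2$, whose reference class $c_\ve[\chi_0]-[\omega_\ve]$ is K\"ahler by \eqref{cohomve} and converges to the fixed K\"ahler class $c_0[\chi_0]-[\omega_0]$ of \eqref{condition}, while the density $\omega_\ve^2/\hat\omega^2$ of the right-hand side stays bounded in $L^\infty$ uniformly in $\ve$ because $\omega_0$ is smooth. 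Kolodziej's estimate then bounds $\mathrm{osc}_M\psi_\ve$ uniformly in $\ve$, which (with the normalization $\sup_M\psi_\ve=0$) yields $\inf_M\psi_\ve\ge-C$ and closes the argument. The main difficulty is precisely to keep every constant independent of $\ve$ as both the class and the Monge--Amp\`ere measure degenerate in the limit $\ve\to0$.
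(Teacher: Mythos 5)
Your proposal is correct in substance and rests on the same central idea as the paper: compare $\varphi_\ve$ with a stationary solution $\psi_\ve$ of the critical equation, whose uniform $L^\infty$ bound comes from Chen's trick of converting that equation into a complex Monge--Amp\`ere equation. The differences lie in the implementation, at two places. First, the paper constructs $\psi_\ve$ by solving the Monge--Amp\`ere equation (\ref{yau}) directly via Yau's theorem and gets the $\ve$-independent bound from Yau's Moser-iteration argument (uniform Sobolev constants for the family $\alpha_\ve>\delta\chi_0$, uniformly bounded densities $\omega_\ve^2/\alpha_\ve^2$), whereas you invoke Chen's existence theorem for the critical equation and then apply Ko{\l}odziej's estimate to the transformed potential $c_\ve\psi_\ve$; both routes work, and the uniformity of Ko{\l}odziej's constants does hold for the reasons you give (the reference classes converge to the fixed K\"ahler class $c_0[\chi_0]-[\omega_0]$, and $\omega_\ve^2/\hat\omega^2$ is uniformly bounded). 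Second, for the comparison itself the paper writes $\dot\theta_\ve$, $\theta_\ve=\varphi_\ve-\psi_\ve$, as an integral of linearizations, equation (\ref{thetaevolve}), so that $\theta_\ve$ satisfies a linear parabolic equation with no zeroth-order term and the maximum principle bounds it from \emph{both} sides at once. Your convexity argument yields only the lower bound, which is why you must produce the upper bound separately via conservation of the Monge--Amp\`ere energy together with the sup-estimate for $\chi_0$-plurisubharmonic functions. That route is valid (the paper itself records the conservation of $\mathcal{I}_{\omega_0,\chi_0}$ in Section 3), but it is extra machinery that the two-sided linearized maximum principle renders unnecessary.

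One point you should repair: the reference form $c_\ve\chi_0-\omega_\ve$ in your Monge--Amp\`ere equation is only \emph{cohomologically} positive by (\ref{cohomve}); as a form it may fail to be positive, and Ko{\l}odziej's (or Yau's) estimate requires a genuine K\"ahler background. The paper handles this at the outset: condition (\ref{condition}) gives a fixed smooth $\eta$ with $c_0\chi_0-\omega_0+dd^c\eta>2\delta\chi_0$, hence $c_\ve\chi_0-\omega_\ve+dd^c\eta>\delta\chi_0$ for all small $\ve$, and one estimates $\varphi_\ve-\eta$ instead of $\varphi_\ve$. The same fixed $\eta$ closes your argument: apply Ko{\l}odziej to $c_\ve\psi_\ve-\eta$ with background $c_\ve\chi_0-\omega_\ve+dd^c\eta$; this also supplies the compact family of uniformly positive K\"ahler backgrounds on which the uniformity of Ko{\l}odziej's constant is legitimate.
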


\begin{proof}
Put 
\begin{equation} \label{alpha}
\alpha_{\ve} := c_{\ve} \chi_0 - \omega_{\ve} >0.
\end{equation}
We are allowed to assume without loss of generality that $\alpha_{\ve}$ is K\"ahler and in addition, there exists $\delta>0$ such that $\alpha_\ve > \delta \chi_0$ for all $\ve \in [0, \ve_0]$. This is possible since the condition (\ref{condition}) implies that we can find a smooth function $\eta$ with $c_0 \chi_0 - \omega_0 + dd^c \eta > 2\delta \chi_0$ for a small $\delta>0$.  Shrinking $\ve_0$ if necessary, we may assume that $c_{\ve} \chi_0 -\omega_{\ve} + dd^c \eta > \delta \chi_0$ for all $\ve \in [0,\ve_0]$, and we can estimate $\varphi_\ve- \eta$ instead of $\varphi_\ve$. 

We begin by proving an $L^{\infty}$ estimate for $\varphi_{\ve}$ which is independent of $\ve$.  We follow an argument similar to that in \cite{FLSW}.  It uses the trick of \cite{Ch1} of rewriting the critical equation as a complex Monge-Amp\`ere equation, together with Yau's $L^{\infty}$ estimate.

There exists a smooth solution $\psi_{\ve}$  of the equation
\be \label{yau}
(\alpha_{\ve} + c_{\ve} dd^c \psi_{\ve})^2 = \omega_{\ve}^2, \quad \alpha_{\ve} + c_{\ve} dd^c \psi_{\ve} >0, \quad \sup_M \psi_{\ve} =0.
\ee
Indeed, since $$[\alpha_{\ve}]^2 = c^2_{\ve} [\chi_0]^2 + [\omega_{\ve}]^2 - 2c_{\ve} [\chi_0]\cdot[\omega_{\ve}] = [\omega_{\ve}]^2,$$
this follows from Yau's theorem \cite{Y1}.  Moreover, 
$$\| \psi_{\ve} \|_{L^{\infty}} \le C,$$
for $C$  independent of $\ve$. This follows from Yau's original proof using Moser's iteration, since $\alpha_\ve$ is a smooth family of K\"ahler metrics which satisfy
$\alpha_{\ve} > \delta \chi_0$ and hence $\omega_\ve^2/ \alpha_\ve^2$ is uniformly bounded from above.  Here we are using the fact that the Sobolev constant, used in Yau's iteration argument,  remains bounded on any set of Riemannian metrics which is compact in the $C^0$ topology and has a uniform lower bound.

  Note that Yau's $C^{\infty}$ estimates for $\psi_{\ve}$ may indeed depend on $\ve$, but in what follows we only need uniformity in the $L^{\infty}$ estimate.  Observe that 
$$\chi_{\psi_{\ve}} = \chi_0 + dd^c \psi_{\ve} >0,$$
since $\chi_{\psi_{\ve}} =\frac{1}{c_{\ve}} (\alpha_{\ve}+ c_{\ve} dd^c \psi_{\ve} + \omega_{\ve})$ and $\alpha_{\ve} + c_{\ve} dd^c \psi_{\ve} >0$.

We have 
$$\omega_{\ve}^2 = (c_{\ve} \chi_{\psi_{\ve}} -\omega_{\ve})^2 = c_{\ve}^2 \chi_{\psi_{\ve}}^2 - 2 c_{\ve} \chi_{\psi_{\ve}} \wedge \omega_{\ve} + \omega_{\ve}^2,$$ and hence $\chi_{\psi_{\ve}}$ satisfies the critical equation
$c_{\ve} \chi_{\psi_{\ve}}^2 = 2\chi_{\psi_{\ve}} \wedge \omega_{\ve}$.

Now put $\theta_{\ve} = \varphi_{\ve} - \psi_{\ve}$ and compute
\begin{align} 
\ddt{\theta_{\ve}} & = \ddt{\varphi_{\ve}}
 = \frac{2\chi_{\psi_{\ve}} \wedge \omega_{\ve}}{\chi_{\psi_{\ve}}^2} - \frac{2\chi_{\varphi_{\ve}} \wedge \omega_{\ve}}{\chi_{\varphi_{\ve}}^2} 
 = \int_0^1 \frac{d}{dv} \left( \frac{2 \eta_v \wedge \omega_{\ve}}{\eta_v^2} \right) dv,
\end{align}
where $\eta_v = v\chi_{\psi_{\ve}} + (1-v) \chi_{\varphi_{\ve}}$ for $v\in [0,1]$. 
 Define $\tau_v^{\ov{\ell}k} = \eta_v^{\ov{j}k} \eta_v^{\ov{\ell} i} (g_{\ve})_{i \ov{j}}$, which is positive definite, and compute \begin{align} \nonumber
\ddt{\theta_{\ve}} & = \int_0^1 \left( \frac{d}{dv} \eta_v^{\ov{j}i} (g_{\ve})_{i \ov{j}} \right) dv \\ \nonumber 
& = - \int_0^1 \eta_v^{\ov{j} k} \eta_v^{\ov{\ell}i} \left( \frac{d}{dv} \eta_v \right)_{k \ov{\ell}} (g_{\ve})_{i \ov{j}}dv \\ \nonumber
& = - \int_0^1 \tau_v^{\ov{\ell} k} (\chi_{\psi_{\ve}} - \chi_{\varphi_{\ve}})_{k \ov{\ell}} dv \\ \label{thetaevolve}
& =  \left( \int_0^1 \tau_v^{\ov{\ell}k} dv \right) \partial_k \partial_{\ov{\ell}} \theta_{\ve}.
\end{align}
Since $ \left( \int_0^1 \tau_v^{\ov{\ell}k} dv \right)$ is a positive definite tensor, we apply the maximum principle to see that $\theta_{\ve}$ is uniformly bounded  by $\sup_M |\theta_{\ve}|$ at $t=0$.  Since $\varphi_{\ve}|_{t=0} = \varphi_0$ and $\psi_{\ve}$ is uniformly bounded it follows that $\theta_{\ve}$ is uniformly bounded independent of $\ve$.   Hence $\varphi_{\ve}$ is uniformly bounded independent of $\ve$.
\end{proof}

Next we estimate the time derivative of $\varphi_{\ve}$.  First some notation: define an operator $\tilde{\Delta}_{\ve} := h_{\ve}^{\ov{j} i} \partial_i \partial_{\ov{j}},$
where $h_{\ve}^{\ov{j} i} := \chi_{\varphi_{\ve}}^{\ov{j} p} \chi_{\varphi_{\ve}}^{\ov{q} i} (g_{\ve})_{p \ov{q}}$.   Then:

\begin{lemma} \label{lemmalbchi}
There exists a uniform constant $C$ such that for all $t$ and $\ve \in (0, \ve_0]$,
$$\| \dot{\varphi}_{\ve} \|_{L^{\infty}(M)} \le C.$$
Hence we have
\begin{equation} \label{lbchi}
\chi_{\varphi_{\ve}} \ge \frac{1}{C'} \omega_{\ve} = \frac{1}{C'}(\omega_0 + \ve \hat{\omega}),
\end{equation}
for a uniform $C'>0$.
\end{lemma}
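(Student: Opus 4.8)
The plan is to show that $\dot{\varphi}_{\ve}$ itself satisfies a linear parabolic equation whose spatial part is precisely the operator $\tilde{\Delta}_{\ve}$ defined just above the lemma, and then to run the maximum principle. First I would rewrite the flow (\ref{Jflowve}) in trace form: since $M$ has complex dimension two, $\frac{2\chi_{\varphi_{\ve}}\wedge\omega_{\ve}}{\chi_{\varphi_{\ve}}^2} = \tr{\chi_{\varphi_{\ve}}}{\omega_{\ve}}$, so the equation reads $\dot{\varphi}_{\ve} = c_{\ve} - \tr{\chi_{\varphi_{\ve}}}{\omega_{\ve}}$.

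Next I would differentiate this in $t$. Since $c_{\ve}$, $\omega_{\ve}$ and $\chi_0$ do not depend on $t$, and $\partial_t (\chi_{\varphi_{\ve}})_{p\ov{q}} = \partial_p\partial_{\ov{q}}\dot{\varphi}_{\ve}$, the standard variation formula $\partial_t(\chi_{\varphi_{\ve}}^{\ov{j}i}) = -\chi_{\varphi_{\ve}}^{\ov{j}p}\chi_{\varphi_{\ve}}^{\ov{q}i}\partial_p\partial_{\ov{q}}\dot{\varphi}_{\ve}$ gives, after relabelling indices and using $(g_{\ve})_{p\ov{q}} = (\omega_{\ve})_{p\ov{q}}$, the identity $\partial_t\dot{\varphi}_{\ve} = \tilde{\Delta}_{\ve}\dot{\varphi}_{\ve}$. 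The only thing to check here is the index bookkeeping confirming that the resulting operator is exactly $h_{\ve}^{\ov{j}i}\partial_i\partial_{\ov{j}}$; this is routine. Since $h_{\ve}^{\ov{j}i} = \chi_{\varphi_{\ve}}^{\ov{j}p}\chi_{\varphi_{\ve}}^{\ov{q}i}(g_{\ve})_{p\ov{q}}$ is positive definite (being built from the positive forms $\chi_{\varphi_{\ve}}$ and $g_{\ve}$), the function $\dot{\varphi}_{\ve}$ solves a linear parabolic equation with positive definite leading coefficient and no zeroth-order term.

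The maximum principle then shows that $\sup_M\dot{\varphi}_{\ve}$ is nonincreasing and $\inf_M\dot{\varphi}_{\ve}$ is nondecreasing in $t$, so $\|\dot{\varphi}_{\ve}\|_{L^{\infty}(M)}$ is controlled by its value at $t=0$. There $\varphi_{\ve} = \varphi_0$, hence $\dot{\varphi}_{\ve}|_{t=0} = c_{\ve} - \tr{\chi_{\varphi_0}}{\omega_{\ve}}$. Because $\chi_{\varphi_0}$ is a \emph{fixed} K\"ahler metric bounded below and $\omega_{\ve} = \omega_0 + \ve\hat{\omega}$ is bounded above uniformly for $\ve\in(0,\ve_0]$ (with $c_{\ve}\to c_0$), this initial quantity is bounded in $L^{\infty}$ independently of $\ve$. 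This yields the desired uniform bound on $\|\dot{\varphi}_{\ve}\|_{L^{\infty}(M)}$.

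Finally, for the lower bound (\ref{lbchi}), I would rearrange the flow as $\tr{\chi_{\varphi_{\ve}}}{\omega_{\ve}} = c_{\ve} - \dot{\varphi}_{\ve}$, which is now uniformly bounded above by the first part. At each point I diagonalize $\chi_{\varphi_{\ve}}$ with respect to $\omega_{\ve}$, so that $\tr{\chi_{\varphi_{\ve}}}{\omega_{\ve}} = \sum_i 1/\lambda_i$, where $\lambda_i>0$ are the eigenvalues of $\chi_{\varphi_{\ve}}$ relative to $\omega_{\ve}$; an upper bound on this sum forces each $\lambda_i \ge 1/C'$, that is, $\chi_{\varphi_{\ve}}\ge \frac{1}{C'}\omega_{\ve}$. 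The only mildly delicate points in the whole argument are the index computation identifying the linearized operator with $\tilde{\Delta}_{\ve}$ and the uniformity in $\ve$ of the $t=0$ bound; the maximum principle and the pointwise linear-algebra step are then immediate.
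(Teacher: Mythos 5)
Your proof is correct and is essentially the paper's own argument: the paper likewise differentiates the flow to get $\ddt{}\dot{\varphi}_{\ve} = \tilde{\Delta}_{\ve}\dot{\varphi}_{\ve}$, applies the maximum principle (citing Chen \cite{Ch2}), and deduces (\ref{lbchi}) from the resulting bound on $\tr{\chi_{\varphi_{\ve}}}{\omega_{\ve}}$. You have simply filled in the details the paper leaves implicit, namely the dimension-two trace identity, the $\ve$-uniformity of the bound at $t=0$, and the eigenvalue argument for the final inequality.
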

\begin{proof} This follows immediately from the maximum principle as in \cite{Ch2}.  Indeed,
differentiating (\ref{Jflowve}) we obtain
$$\ddt{} \dot{\varphi}_{\ve} = \tilde{\Delta}_{\ve} \dot{\varphi}_{\ve}.$$
Then by the maximum principle, $\dot{\varphi}_{\ve}$ is bounded uniformly in time.  Moreover, the bound is independent of $\ve$.  In particular,
$\tr{\chi_{\varphi_{\ve}}}{\omega_{\ve}} \le C',$ and this gives (\ref{lbchi}).
\end{proof}

It is important to note Lemma \ref{lemmalbchi} does not give a uniform bound for $\chi_{\varphi_{\ve}}$ away from zero which is independent of $\ve$.  In particular, we have no \emph{a priori} upper bounds for $\tr{\chi_{\varphi_{\ve}}}{\hat{\omega}}$ or $\tr{h_{\ve}}{\hat{g}}:= h_{\ve}^{\ov{j}i} (\hat{g})_{i\ov{j}}$.

Next we wish to prove an estimate for $\chi_{\varphi_{\ve}}$.  For ease of notation, we drop all subscripts $\ve$ and write $\chi$ for $\chi_{\varphi}$.
Write $u = \tr{\hat{\omega}}{\chi}$.  We denote by $\hat{R}_{k\ov{\ell}i \ov{j}}$ the curvature of $\hat{g}$, and raise indices using $\hat{g}$. 

We first derive an evolution equation and differential inequality for $\log u$.

\begin{lemma} \label{lemmaevolve} The evolution equation of $\log u$ is given by
\begin{align} \nonumber
\left( \ddt{} - \tilde{\Delta} \right) \log u   = {} &  \frac{1}{u} \left(- h^{\ov{\ell} k } \hat{R}_{k \ov{\ell}}^{\ \ \ \ov{j} i} \chi_{i \ov{j}} - \hat{g}^{\ov{j}i} h^{\ov{s}p} \chi^{\ov{q} r} \hat{\nabla}_i \chi_{r \ov{s}}\hat{\nabla}_{\ov{j}} \chi_{p \ov{q}}  \right. \\ \nonumber
&    - \hat{g}^{\ov{j}i} h^{\ov{q}r} \chi^{\ov{s} p} \hat{\nabla}_i \chi_{r\ov{s}} \hat{\nabla}_{\ov{j}} \chi_{p\ov{q}}  
 + 2\emph{Re}\left( \hat{g}^{\ov{j}i} \chi^{\ov{q} k} \chi^{\ov{\ell} p} \hat{\nabla}_{\ov{j}} \chi_{p\ov{q}} \hat{\nabla}_i g_{k\ov{\ell}} \right) \\
 &  \left.  - \hat{g}^{\ov{j}i} \chi^{\ov{\ell} k} \hat{\nabla}_i \hat{\nabla}_{\ov{j}} g_{k \ov{\ell}} +  \chi^{\ov{\ell}k} \hat{R}_{\ \, \ov{\ell}}^{\ov{q}} g_{k \ov{q}}  + \frac{ | \partial u|^2_h}{u} \right).  \label{evolve1}
\end{align}
Moreover, there exists a constant $C$ depending only on $\hat{g}$ and $\| g_0\|_{C^2(M, \hat{g})}$ such that
\begin{align} \nonumber
\left( \ddt{} - \tilde{\Delta} \right) \log u  & \le C \emph{tr}_{h}{\hat{g}} + \frac{C}{u} (\emph{tr}_{\chi}{\hat{\omega}})(\emph{tr}_{\omega}{\hat{\omega}})  + 2 \emph{Re} \left( \chi^{\ov{s} k}  \left( \frac{\partial_k u}{u^2} \right)  \partial_{\ov{s}} \emph{tr}_{\hat{\omega}} \omega \right).
\end{align}
\end{lemma}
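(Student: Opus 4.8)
The plan is to establish the evolution equation (\ref{evolve1}) by a direct covariant computation of $\left(\ddt{}-\tilde{\Delta}\right)u$ for $u=\tr{\hat\omega}{\chi}=\hat g^{\ov ji}\chi_{i\ov j}$, performed with the Chern connection $\hat\nabla$ of the \emph{fixed} metric $\hat g$, and then to pass to $\log u$ by the elementary identity $\left(\ddt{}-\tilde{\Delta}\right)\log u=\frac1u\left(\ddt{}-\tilde{\Delta}\right)u+|\partial u|^2_h/u^2$, which produces the last term inside the bracket of (\ref{evolve1}). In complex dimension two $\frac{2\chi\wedge\omega_\ve}{\chi^2}=\tr{\chi}{\omega_\ve}=\chi^{\ov ji}(g_\ve)_{i\ov j}$, so the flow (\ref{Jflowve}) reads $\dot\varphi=c_\ve-\tr{\chi}{\omega_\ve}$. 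Since $\hat g$ is $t$-independent and the mixed Hessian of a function is its covariant version, $\partial_t u=\hat g^{\ov ji}\hat\nabla_i\hat\nabla_{\ov j}\dot\varphi=-\hat g^{\ov ji}\hat\nabla_i\hat\nabla_{\ov j}\!\left(\chi^{\ov qp}g_{p\ov q}\right)$, the constant $c_\ve$ dropping out. Expanding with the Leibniz rule and $\hat\nabla\chi^{-1}=-\chi^{-1}(\hat\nabla\chi)\chi^{-1}$, and using $\chi^{\ov qa}\chi^{\ov bp}g_{p\ov q}=h^{\ov ba}$, I obtain a second-order term $\hat g^{\ov ji}h^{\ov\ell k}\hat\nabla_i\hat\nabla_{\ov j}\chi_{k\ov\ell}$, the two first-order quadratic-in-$\hat\nabla\chi$ terms, the mixed $\hat\nabla\chi$--$\hat\nabla g$ term, and the term $-\hat g^{\ov ji}\chi^{\ov\ell k}\hat\nabla_i\hat\nabla_{\ov j}g_{k\ov\ell}$.

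The key step is to reconcile this second-order term with $\tilde{\Delta}u=h^{\ov\ell k}\hat g^{\ov ji}\hat\nabla_k\hat\nabla_{\ov\ell}\chi_{i\ov j}$ by commuting covariant derivatives. Because $\chi=\chi_0+dd^c\varphi$ with $\chi_0$ closed, the covariant derivatives of $\chi$ are symmetric in the holomorphic and in the antiholomorphic indices, $\hat\nabla_i\chi_{r\ov s}=\hat\nabla_r\chi_{i\ov s}$ and $\hat\nabla_{\ov j}\chi_{p\ov q}=\hat\nabla_{\ov q}\chi_{p\ov j}$. Using these symmetries and then commuting $\hat\nabla_i$ past $\hat\nabla_{\ov\ell}$, which costs the curvature $\hat R$ of $\hat g$, I would show
\[
\hat g^{\ov ji}h^{\ov\ell k}\left(\hat\nabla_i\hat\nabla_{\ov j}\chi_{k\ov\ell}-\hat\nabla_k\hat\nabla_{\ov\ell}\chi_{i\ov j}\right)=-h^{\ov\ell k}\hat R_{k\ov\ell}{}^{\ov ji}\chi_{i\ov j}+\chi^{\ov\ell k}\hat R^{\ov q}{}_{\ov\ell}g_{k\ov q},
\]
the second, Ricci-type, contraction arising because $h=\chi^{-1}(g)\chi^{-1}$ collapses one curvature slot to a trace. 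Combining this with the first-order terms from $\partial_t u$ gives exactly the bracket in (\ref{evolve1}), and the $\log u$ identity above supplies the final $|\partial u|^2_h/u$.

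For the differential inequality I would estimate the right-hand side of (\ref{evolve1}) term by term after dividing by $u$. The two quadratic terms in $\hat\nabla\chi$ are manifestly non-positive (sums of squares weighted by the positive tensors $\hat g^{-1},h^{-1},\chi^{-1}$) and are retained as good negative terms. Since $\omega_\ve=\omega_0+\ve\hat\omega$ has $\hat\nabla g_\ve=\hat\nabla g_0$ and $\hat\nabla\hat\nabla g_\ve=\hat\nabla\hat\nabla g_0$ bounded by $\|g_0\|_{C^2(M,\hat g)}$, and $\hat R$ is bounded, the full-curvature term is controlled by $C\,\tr{h}{\hat g}$ (using $\chi_{i\ov j}\le u\,\hat g_{i\ov j}$), while the Ricci-type term and the $g$-Hessian term are controlled by $\frac{C}{u}(\tr{\chi}{\hat\omega})(\tr{\omega}{\hat\omega})$ (using that $\tr{\hat\omega}{\omega_\ve}$ is bounded above and $\tr{\omega}{\hat\omega}$ bounded below). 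The delicate point is the remaining first-order terms, namely the mixed $\hat\nabla\chi$--$\hat\nabla g$ term together with the positive $|\partial u|^2_h/u^2$: I would complete the square against one of the good negative quadratic terms, absorbing the pure $\hat\nabla\chi$ contributions and bounding the $\hat\nabla g$ contributions by a further $\frac{C}{u}(\tr{\chi}{\hat\omega})(\tr{\omega}{\hat\omega})$, so that the sole non-absorbable piece---which couples the trace $\partial_k u=\hat g^{\ov\ell r}\hat\nabla_k\chi_{r\ov\ell}$ to $\partial_{\ov s}\tr{\hat\omega}{\omega}=\hat g^{\ov ji}\hat\nabla_{\ov s}g_{i\ov j}$ through $\chi^{-1}$---is precisely $2\mathrm{Re}\!\left(\chi^{\ov sk}(\partial_k u/u^2)\partial_{\ov s}\tr{\hat\omega}{\omega}\right)$.

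I expect the main obstacle to lie in the second and third steps jointly: the precise index bookkeeping when commuting the third covariant derivatives, so that exactly the two curvature contractions of (\ref{evolve1}) survive with no spurious first-order remainder, and the Cauchy--Schwarz/completion-of-square at the end, which must absorb every uncontrolled $\hat\nabla\chi$ into the good negative terms while isolating \emph{exactly} the stated cross term rather than over- or under-producing first-order contributions. These manipulations are routine in principle but unforgiving in the signs and contractions.
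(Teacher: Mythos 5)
Your proposal is correct and follows essentially the same route as the paper: a direct computation of $\left(\ddt{}-\tilde{\Delta}\right)\log u$ that uses the closedness of $\chi$ to handle the second-order terms (your covariant commutation identity is exactly the tensorial form of what the paper obtains in $\hat{g}$-normal coordinates, where the two curvature contractions arise instead from $\partial_k\partial_{\ov{\ell}}\hat{g}^{\ov{j}i}$ in $\tilde{\Delta}u$ and from converting $\partial_i\partial_{\ov{j}}g_{k\ov{\ell}}$ into $\hat{\nabla}_i\hat{\nabla}_{\ov{j}}g_{k\ov{\ell}}$), followed by the same Cherrier-type completion of squares against one of the good negative $\hat{\nabla}\chi$-quadratic terms to absorb the mixed $\hat{\nabla}\chi$--$\hat{\nabla}g$ term and $|\partial u|^2_h/u$, leaving precisely the stated cross term. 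The paper implements that square concretely via the tensor $B_{i\ov{j}k}=\hat{\nabla}_i\chi_{k\ov{j}}-\chi_{i\ov{j}}\,\partial_k u/u-g^{\ov{b}a}\chi_{k\ov{b}}\hat{\nabla}_i g_{a\ov{j}}$ and $K=\hat{g}^{\ov{\ell}i}\chi^{\ov{j}p}h^{\ov{q}k}B_{i\ov{j}k}\ov{B_{\ell\ov{p}q}}\ge 0$, which is exactly the completion of squares you outline.
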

\begin{proof}
For any fixed $p \in M$, we choose a holomorphic coordinate system centered at $p$ with the property that $(\partial_k \hat{g}_{i\ov{j}})|_p =0$ for all $i,j,k$.  Compute at $p$,
\begin{align*}
\ddt{} \tr{\hat{\omega}}{\chi}  = {} &\ddt{} \left( \hat{g}^{\ov{j}i} \partial_i \partial_{\ov{j}} \varphi \right)  = -\hat{g}^{\ov{j}i} \partial_i \partial_{\ov{j}} (\chi^{\ov{\ell}k} g_{k \ov{\ell}}) \\
   = {} &- \hat{g}^{\ov{j}i} \partial_i ( - \chi^{\ov{q}k} \chi^{\ov{\ell}p} (\partial_{\ov{j}} \chi_{p\ov{q}} )g_{k\ov{\ell}} + \chi^{\ov{\ell} k} \partial_{\ov{j}} g_{k \ov{\ell}} ) \\
   = {} & \hat{g}^{\ov{j}i} \{ g_{k \ov{\ell}} \chi^{\ov{q}k} \chi^{\ov{\ell}p} \partial_i \partial_{\ov{j}} \chi_{p \ov{q}} - \chi^{\ov{s}k} \chi^{\ov{q}r} (\partial_i \chi_{r\ov{s}}) \chi^{\ov{\ell}p} (\partial_{\ov{j}} \chi_{p\ov{q}}) g_{k \ov{\ell}}  \\
  {} & - \chi^{\ov{q}k} \chi^{\ov{s}p} \chi^{\ov{\ell}r} (\partial_i \chi_{r\ov{s}}) (\partial_{\ov{j}} \chi_{p\ov{q}}) g_{k \ov{\ell}} + \chi^{\ov{q}k} \chi^{\ov{\ell} p} (\partial_{\ov{j}} \chi_{p\ov{q}})(\partial_i g_{k \ov{\ell}}) \\
& \mbox{} + \chi^{\ov{s}k} \chi^{\ov{\ell}r} (\partial_i \chi_{r\ov{s}}) (\partial_{\ov{j}} g_{k \ov{\ell}}) - \chi^{\ov{\ell} k} \partial_i \partial_{\ov{j}} g_{k \ov{\ell}} \} \\
  = {} & \hat{g}^{\ov{j}i} h^{\ov{q}p} \partial_i \partial_{\ov{j}} \chi_{p \ov{q}} - \hat{g}^{\ov{j}i} h^{\ov{s} p} \chi^{\ov{q}r} (\partial_i \chi_{r\ov{s}})(\partial_{\ov{j}} \chi_{p\ov{q}}) - \hat{g}^{\ov{j}i} h^{\ov{q} r} \chi^{\ov{s}p} (\partial_i \chi_{r\ov{s}})(\partial_{\ov{j}} \chi_{p\ov{q}}) \\
& \mbox{} + 2\textrm{Re} \left( \hat{g}^{\ov{j}i} \chi^{\ov{q}k} \chi^{\ov{\ell}p} (\partial_{\ov{j}} \chi_{p\ov{q}} )(\partial_i g_{k \ov{\ell}}) \right) - \hat{g}^{\ov{j}i} \chi^{\ov{\ell}k} \partial_i \partial_{\ov{j}} g_{k \ov{\ell}}.
\end{align*}
And
\begin{align*}
\tilde{\Delta}\log u &=  \frac{\tilde{\Delta}u}{u} - \frac{ | \partial u|^2_h}{u^2} =\frac{1}{u} \left(  h^{\ov{\ell}k } \partial_k \partial_{\ov{\ell}} (\hat{g}^{\ov{j}i} \chi_{i \ov{j}})  - \frac{ | \partial u|^2_h}{u} \right) \\
&= \frac{1}{u} \left( h^{\ov{\ell}k} \hat{R}_{k \ov{\ell}}^{\ \ \ \ov{j}i} \chi_{i \ov{j}} + h^{\ov{\ell}k }\hat{g}^{\ov{j}i} \partial_k \partial_{\ov{\ell}} \chi_{i \ov{j}}- \frac{ | \partial u|^2_h}{u} \right),
\end{align*}
where $|\partial u|^2_h := h^{\ov{j}i} \partial_i u \partial_{\ov{j}}u$.
Then (\ref{evolve1}) follows from these two equations, the K\"ahler condition for $\chi$ and the fact that in  our coordinate system  we have
$$\hat{g}^{\ov{j}i} \chi^{\ov{\ell}k} \partial_i \partial_{\ov{j}} g_{k \ov{\ell}} =  \hat{g}^{\ov{j}i} \chi^{\ov{\ell} k} \hat{\nabla}_i \hat{\nabla}_{\ov{j}} g_{k \ov{\ell}} - \hat{g}^{\ov{j}i} \chi^{\ov{\ell}k} \hat{R}_{i \ov{j} \ \, \, \ov{\ell}}^{\ \ \, \ov{q}} g_{k \ov{q}}=   \hat{g}^{\ov{j}i}\chi^{\ov{\ell} k} \hat{\nabla}_i \hat{\nabla}_{\ov{j}} g_{k \ov{\ell}} - \chi^{\ov{\ell}k} \hat{R}_{\ \, \ov{\ell}}^{\ov{q}} g_{k \ov{q}}.$$

To deal with the terms involving one derivative of $\chi$ we use a completing the square argument, which is formally similar to that of Cherrier \cite{Chr} (see also \cite[Proposition 3.1]{TW}).
Compute 
$$K = \hat{g}^{\ov{\ell}i} \chi^{\ov{j}p} h^{\ov{q}k} B_{i \ov{j}k} \ov{B_{\ell \ov{p} q}} \ge 0,$$
where 
$$B_{i \ov{j}k} = \hat{\nabla}_i \chi_{k \ov{j}} - \chi_{i \ov{j}} \frac{\partial_k u}{u} - g^{\ov{b}a} \chi_{k \ov{b}} \hat{\nabla}_i g_{a\ov{j}}.$$
We compute
\begin{align*}
K  = {} & \hat{g}^{\ov{\ell}i} \chi^{\ov{j}p} h^{\ov{q}k} \hat{\nabla}_i \chi_{k \ov{j}} \hat{\nabla}_{\ov{\ell}} \chi_{p\ov{q}} + \hat{g}^{\ov{\ell}i} \chi^{\ov{j}p} h^{\ov{q}k} \chi_{i\ov{j}} \left( \frac{\partial_k u}{u} \right) \chi_{p\ov{\ell}} \left( \frac{\partial_{\ov{q}}u}{u} \right) \\
&  + \hat{g}^{\ov{\ell}i} \chi^{\ov{j}p} h^{\ov{q}k} g^{\ov{b}a} \chi_{k \ov{b}} (\hat{\nabla}_i g_{a\ov{j}}) g^{\ov{s}r} \chi_{r \ov{q}} \hat{\nabla}_{\ov{\ell}} g_{p \ov{s}} - 2 \textrm{Re} \left( \hat{g}^{\ov{\ell}i} \chi^{\ov{j} p} h^{\ov{q}k} (\hat{\nabla}_i \chi_{k \ov{j}}) \chi_{p\ov{\ell}} \left( \frac{\partial_{\ov{q}}u}{u} \right) \right) \\
& - 2 \textrm{Re} \left( \hat{g}^{\ov{\ell} i} \chi^{\ov{j}p} h^{\ov{q}k} (\hat{\nabla}_i \chi_{k \ov{j}}) g^{\ov{b}a} \chi_{a\ov{q}} \hat{\nabla}_{\ov{\ell}} g_{p \ov{b}}\right) + 2\textrm{Re} \left( \hat{g}^{\ov{\ell}i} \chi^{\ov{j}p} h^{\ov{q}k} \chi_{i \ov{j}} \left( \frac{\partial_k u}{u} \right) g^{\ov{s}{r}} \chi_{r \ov{q}} \hat{\nabla}_{\ov{\ell}} g_{p \ov{s}} \right).
\end{align*}
Using the definition of $h^{\ov{j}i}$ and the K\"ahler condition for $\chi$,
\begin{align*}
K  = {} & \hat{g}^{\ov{\ell}i} \chi^{\ov{j}p} h^{\ov{q}k} \hat{\nabla}_i \chi_{k \ov{j}} \hat{\nabla}_{\ov{\ell}} \chi_{p\ov{q}} + \frac{ | \partial u|^2_h}{u}  
 + \hat{g}^{\ov{\ell}i} \chi^{\ov{j}p} \chi^{\ov{q}c} \chi^{\ov{d}k} g_{c \ov{d}} g^{\ov{b}a} \chi_{k \ov{b}} g^{\ov{s}r} \chi_{r \ov{q}} (\hat{\nabla}_i g_{a\ov{j}})(\hat{\nabla}_{\ov{\ell}} g_{p \ov{s}}) \\
& - 2\textrm{Re} \left( \partial_k \left( \hat{g}^{\ov{\ell} i} \chi_{i \ov{\ell}} \right) h^{\ov{q}k} \frac{\partial_{\ov{q}}u}{u} \right) - 2 \textrm{Re} \left( \hat{g}^{\ov{\ell}i} \chi^{\ov{j}p} \chi^{\ov{q} c} \chi^{\ov{d}k} g_{c \ov{d}} (\hat{\nabla}_i \chi_{k \ov{j}}) g^{\ov{b}a} \chi_{a\ov{q}} (\hat{\nabla}_{\ov{\ell}} g_{p \ov{b}}) \right) \\
& + 2 \textrm{Re} \left( \hat{g}^{\ov{\ell}i} \chi^{\ov{j}p} \chi^{\ov{q} c} \chi^{\ov{d}k} g_{c\ov{d}} \chi_{i \ov{j}} \left( \frac{\partial_k u}{u} \right) g^{\ov{s}r} \chi_{r \ov{q}} (\hat{\nabla}_{\ov{\ell}} g_{p \ov{s}}) \right)\\
 = {} &  \hat{g}^{\ov{\ell}i} \chi^{\ov{j}p} h^{\ov{q}k} \hat{\nabla}_i \chi_{k \ov{j}} \hat{\nabla}_{\ov{\ell}} \chi_{p\ov{q}} - \frac{ | \partial u|^2_h}{u} + \hat{g}^{\ov{\ell}i} \chi^{\ov{j} p} g^{\ov{s}a} (\hat{\nabla}_i g_{a\ov{j}})(\hat{\nabla}_{\ov{\ell}}g_{p\ov{s}}) \\
&- 2 \textrm{Re} \left( \hat{g}^{\ov{\ell}i} \chi^{\ov{j}p} \chi^{\ov{b}k} (\hat{\nabla}_i \chi_{k \ov{j}})(\hat{\nabla}_{\ov{\ell}} g_{p\ov{b}}) \right) + 2 \textrm{Re} \left( \chi^{\ov{s} k}  \hat{g}^{\ov{\ell}p} \left( \frac{\partial_k u}{u} \right)  (\hat{\nabla}_{\ov{\ell}} g_{p \ov{s}}) \right).
\end{align*}
Combining this with (\ref{evolve1}) gives,
\begin{align} \nonumber
\left( \ddt{} - \tilde{\Delta} \right) \log u  = {}  & \frac{1}{u} \left\{- h^{\ov{\ell} k } \hat{R}_{k \ov{\ell}}^{\ \ \ \ov{j} i} \chi_{i \ov{j}} - \hat{g}^{\ov{j}i} h^{\ov{s}p} \chi^{\ov{q} r} \hat{\nabla}_i \chi_{r \ov{s}}\hat{\nabla}_{\ov{j}} \chi_{p \ov{q}} 
 - K \right. \\ \nonumber
 & \mbox{} + \hat{g}^{\ov{\ell}i} \chi^{\ov{j} p} g^{\ov{s}a} (\hat{\nabla}_i g_{a\ov{j}})(\hat{\nabla}_{\ov{\ell}}g_{p\ov{s}}) 
  + 2 \textrm{Re} \left(  \chi^{\ov{s} k}  \left( \frac{\partial_k u}{u} \right) \hat{g}^{\ov{\ell}p} (\hat{\nabla}_{\ov{s}} g_{p \ov{\ell}}) \right)\\ \nonumber
  & \mbox{} \left. - \hat{g}^{\ov{j}i} \chi^{\ov{\ell} k} \hat{\nabla}_i \hat{\nabla}_{\ov{j}} g_{k \ov{\ell}}  +  \chi^{\ov{\ell}k} \hat{R}_{\ \, \ov{\ell}}^{\ov{q}} g_{k \ov{q}}  \right\} \\ \nonumber
 \le {} & C \tr{h}{\hat{g}} + \frac{C}{u} (\tr{\chi}{\hat{\omega}})(\tr{\omega}{\hat{\omega}}) + 2 \textrm{Re} \left( \chi^{\ov{s} k}  \left( \frac{\partial_k u}{u^2} \right)  \partial_{\ov{s}} \tr{\hat{\omega}} \omega \right).\end{align}
Indeed, to see the last inequality, we estimate 
$$\frac{1}{u} | h^{\ov{\ell} k } \hat{R}_{k \ov{\ell}}^{\ \ \ \ov{j} i} \chi_{i \ov{j}}| \le C  \tr{h}{\hat{g}},$$
and
$$\frac{1}{u} \left|\hat{g}^{\ov{j}i} \chi^{\ov{\ell} k} \hat{\nabla}_i \hat{\nabla}_{\ov{j}} g_{k \ov{\ell}} \right| + \frac{1}{u}\left|  \chi^{\ov{\ell}k} \hat{R}_{\ \, \ov{\ell}}^{\ov{q}} g_{k \ov{q}} \right| + \frac{1}{u} \hat{g}^{\ov{\ell}i} \chi^{\ov{j} p} g^{\ov{s}a} (\hat{\nabla}_i g_{a\ov{j}})(\hat{\nabla}_{\ov{\ell}}g_{p\ov{s}})  \le \frac{C}{u} (\tr{\chi}{\hat{\omega}})(\tr{\omega}{\hat{\omega}}),$$
for a constant $C$ depending only on $\hat{g}$ and $\| g_0 \|_{C^2(M, \hat{g})}$.  Note that $\tr{g}{\hat{g}}$ is uniformly bounded from below away from zero, but may blow up along $D$.
This completes the proof of the lemma.

\end{proof}

Next we prove the estimate on $\chi$.

\begin{lemma}  There exist uniform constants $C$, $\gamma$, independent of $\ve$, such that
$$u=\emph{tr}_{\hat{\omega}}{\chi} \le \frac{C}{|s|_H^{2\gamma}}.$$
\end{lemma}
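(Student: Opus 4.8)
The plan is to bound $u$ by a maximum principle applied to
\[
Q := \log u - A\varphi_\ve + \gamma \log|s|^2_H ,
\]
for large constants $A,\gamma>0$ to be chosen. Since $\ve>0$ the flow (\ref{Jflowve}) is parabolic and $\varphi_\ve$ is smooth on all of $M$, so $u$ is finite everywhere; as $\gamma\log|s|_H^2\to-\infty$ along $D$, $Q\to-\infty$ there, and hence the supremum of $Q$ over $(M\setminus D)\times[0,T]$ is attained either at $t=0$, where it is controlled by the fixed initial data, or at an interior point $(x_0,t_0)$ of $M\setminus D$ with $t_0>0$. At such a point $\de Q=0$ and $\left(\ddt{}-\tilde{\Delta}\right)Q\ge0$. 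The goal is to show this forces $\log u + \gamma\log|s|_H^2 \le C$, which is the desired estimate.

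First I would compute the three contributions. For $\log u$ I use the differential inequality of Lemma \ref{lemmaevolve}. For $-A\varphi_\ve$, using $\dot\varphi_\ve=c_\ve-\tr{\chi}{\omega_\ve}$ together with $\tilde\Delta\varphi_\ve=\tr{\chi}{\omega_\ve}-\tr{h}{\chi_0}$, one gets
\[
\left(\ddt{}-\tilde\Delta\right)(-A\varphi_\ve)=-Ac_\ve+2A\,\tr{\chi}{\omega_\ve}-A\,\tr{h}{\chi_0}.
\]
For the barrier, $dd^c\log|s|_H^2=-R_H$ on $M\setminus D$, so its contribution is $\gamma\,\tr{h}{R_H}$. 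The first key point is that this term is bounded: the second hypothesis in (\ref{omega0}), $\omega_\ve-\rho R_H\ge\frac1{C_0}\hat{\omega}$, gives $R_H\le\frac1\rho\omega_\ve$, and if $\mu_i$ denote the eigenvalues of $\omega_\ve$ with respect to $\chi$ then $\tr{h}{\omega_\ve}=\sum_i\mu_i^2\le(\sum_i\mu_i)^2=(\tr{\chi}{\omega_\ve})^2\le C$ by Lemma \ref{lemmalbchi}, whence $\gamma\,\tr{h}{R_H}\le C\gamma$.

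Next I would absorb the remaining terms. The term $C\,\tr{h}{\hat{g}}$ from Lemma \ref{lemmaevolve} is dominated by $-A\,\tr{h}{\chi_0}$ after using $\chi_0\ge c_1\hat{\omega}$ and taking $A$ large. The first hypothesis $\omega_\ve\ge\frac1{C_0}|s|_H^{2\beta}\hat{\omega}$, together with the lower bound $\chi\ge\frac1{C'}\omega_\ve$ of Lemma \ref{lemmalbchi}, yields $\tr{\omega_\ve}{\hat{\omega}}\le C|s|_H^{-2\beta}$ and $\tr{\chi}{\hat{\omega}}\le C|s|_H^{-2\beta}$; hence the term $\frac{C}{u}(\tr{\chi}{\hat{\omega}})(\tr{\omega_\ve}{\hat{\omega}})$ is at most $C|s|_H^{-4\beta}/u$, which is harmless once $u$ is large relative to $|s|_H^{-2\gamma}$ (provided $\gamma$ is not too small). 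The mixed gradient term is treated by Cauchy--Schwarz: the factor $|\de\,\tr{\hat{\omega}}{\omega_\ve}|_\chi$ is controlled by $(\tr{\chi}{\hat{\omega}})^{1/2}\le C|s|_H^{-\beta}$, and the factor $\de u/u$ is eliminated using the critical-point identity $\de_k u/u=A\de_k\varphi_\ve-\gamma\de_k\log|s|_H^2$, the resulting pieces being absorbed into the good terms and the bounded barrier contribution.

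The mechanism that closes the estimate is a competition between the negative terms $-Ac_\ve$ and $-A\,\tr{h}{\chi_0}$ on the one hand, and the degenerate positive terms on the other. Where $u$ blows up through a single collapsing direction, $\omega_0$ degenerates and $\tr{h}{\chi_0}$ is correspondingly large (of order $|s|_H^{-2\beta}$), so $-A\,\tr{h}{\chi_0}$ dominates; where instead $\chi$ would grow in all directions, $\tr{\chi}{\omega_\ve}$ tends to zero and the constant $-Ac_\ve$ dominates. In either case, once the blow-up rate of $u$ exceeds a threshold governed by $\beta$, one finds $\left(\ddt{}-\tilde\Delta\right)Q<0$ at the maximum, a contradiction; this pins down $\gamma$ in terms of $\beta$ and gives $u\le C|s|_H^{-2\gamma}$. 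I expect the main obstacle to be the gradient term and the accompanying power-counting in $|s|_H$: one must choose the single exponent $\gamma$ large enough to absorb every negative power of $|s|_H$ produced by the degenerate terms while keeping $-Ac_\ve$ and $-A\,\tr{h}{\chi_0}$ in control, and verify that the two regimes above genuinely exhaust the ways in which $u$ can become large.
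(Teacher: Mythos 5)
Your test quantity and overall strategy are essentially the paper's: a Tsuji-type barrier $\log|s|^2_H$, a maximum-principle argument for $\log u - A\varphi_\ve + \gamma\log|s|^2_H$ using Lemma \ref{lemmaevolve}, and the same power-counting in $|s|_H$ for the degenerate terms. Several of your steps are correct as stated: the bound $\gamma\,\tr{h}{R_H}\le C\gamma$ via $R_H\le\frac{1}{\rho}\omega_\ve$ and $\tr{h}{\omega_\ve}\le(\tr{\chi}{\omega_\ve})^2\le C$ (a neat shortcut; the paper instead keeps $\chi_0-\delta R_H\ge\sigma\hat\omega$ as a single positive term), the bound $\frac{C}{u}(\tr{\chi}{\hat\omega})(\tr{\omega}{\hat\omega})\le C|s|_H^{-4\beta}/u$, which matches (\ref{term2}), and the computation of $(\ddt{}-\tilde\Delta)(-A\varphi_\ve)$. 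The genuine gap is exactly at the step you dismiss with ``the resulting pieces being absorbed into the good terms and the bounded barrier contribution.'' At a maximum of your $Q$ the critical-point identity gives $\partial_k u/u = A\,\partial_k\tilde\varphi$ with $\tilde\varphi:=\varphi_\ve-(\gamma/A)\log|s|^2_H$, so the mixed term becomes $\frac{2A}{u}\mathrm{Re}\left(\chi^{\ov{s}k}\partial_k\tilde\varphi\,\partial_{\ov{s}}\tr{\hat\omega}{\omega}\right)$, which after Cauchy--Schwarz is of size $\frac{CA}{\sqrt{u}\,|s|^{2\beta}_H}|\partial\tilde\varphi|_h$. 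Nothing in your differential inequality can absorb $|\partial\tilde\varphi|_h$: the only negative terms available, $-Ac_\ve$ and $-A\,\tr{h}{\chi_0}$, are zeroth order, and writing $|\partial\tilde\varphi|^2_h\le\tr{h}{\hat g}\,|\partial\tilde\varphi|^2_{\hat\omega}$ only reintroduces $|\partial\tilde\varphi|_{\hat\omega}$, for which no bound uniform in $\ve$ exists (there is no gradient estimate for $\varphi_\ve$ at this stage, and $\partial\log|s|^2_H$ blows up along $D$). A test function that is \emph{linear} in $\varphi_\ve$ produces no negative gradient term, so this absorption cannot be carried out.

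This is precisely why the paper uses $Q=\log u - A\tilde\varphi+\frac{1}{\tilde\varphi+C_0}$, i.e.\ replaces the linear function $A\tilde\varphi$ by a slightly concave one (the Phong--Sturm trick): the correction term contributes the good negative quantity $-\frac{2}{(\tilde\varphi+C_0)^3}|\partial\tilde\varphi|^2_h$, and Cauchy--Schwarz splits the mixed term into a piece absorbed by it plus $\frac{C A^2(\tilde\varphi+C_0)^3}{u|s|^{4\beta}_H}$, which is then handled exactly as you handle the other degenerate term (one may assume it is $\le 1$ at the maximum, or else $Q$ is already bounded above). A second, smaller gap is your closing ``two-regime'' argument: the two regimes are not exhaustive, and in particular the leftover $-Ac_\ve+2A\,\tr{\chi}{\omega_\ve}$ need not be negative, since Lemma \ref{lemmalbchi} only gives $\tr{\chi}{\omega_\ve}\le C$ with $C$ determined by the initial data, not $\tr{\chi}{\omega_\ve}\le c_\ve/2$. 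The paper closes instead by using the cohomological hypothesis (\ref{condition}) in the form (\ref{con1}), which converts $A\,\tr{h}{\chi_0}$ into $\frac{(1+\eta)A}{c}\tr{h}{\omega}$ up to absorbable errors, and then completes the square in the eigenvalues of $\chi$ relative to $\omega_\ve$; this quadratic-versus-linear comparison is what actually forces $\tr{g}{\chi}\le C$ at the maximum. With the Phong--Sturm correction added to your $Q$ and the endgame replaced by this completing-the-square argument, your outline becomes the paper's proof.
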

\begin{proof}    From the condition (\ref{alpha}), we may choose uniform positive constants $\eta$, $\delta$ and $\sigma$ to be sufficiently small so that
\begin{equation} \label{con1}
c \chi_0 - \omega - c\delta R_H > 3 \eta \omega,
\end{equation}
and
\begin{equation} \label{con2}
 \chi_0 - \delta R_H  \ge \sigma \hat{\omega},
\end{equation}
where we write $c= c_{\ve} = 2 [\chi_0]\cdot [\omega_{\ve}]/[\chi_0]^2$.

Define (cf. \cite{Ts})
$$\tilde{\varphi} = \varphi - \delta \log |s|^2_H.$$  Note that $\tilde{\varphi}$ is bounded from below, and tends to infinity along $D$.  Let $A>1$ be a large constant to be determined later.
Consider the evolution of the quantity 
$$Q = \log u - A \tilde{\varphi} + \frac{1}{\tilde{\varphi}+C_0},$$
where we choose the uniform constant  $C_0$  so that
$$0 \le \frac{1}{\tilde{\varphi}+C_0} \le 1.$$
This type of quantity was used by Phong-Sturm  \cite{PS} in their study of the degenerate complex Monge-Amp\`ere equation (for its later use in a parabolic setting, see \cite{TW}). 
Note that $Q$ achieves a maximum at each time $t$ away from $D$.   Since $\varphi$ is uniformly bounded, it suffices to bound $Q$ from above at its maximum, as long as $A$ is chosen uniformly.
 
Note that at a maximum of $Q$ we have
$$\frac{\partial_k u}{u} = \left(A + \frac{1}{(\tilde{\varphi} +C_0)^2}\right) \partial_k \tilde{\varphi}.$$
Then at a maximum of $Q$ we have, from Lemma \ref{lemmaevolve},
\begin{align} \nonumber
\left( \ddt{} - \tilde{\Delta} \right)Q  \le {}  &C_1\tr{h}{\hat{g}} +  \frac{C (\tr{\chi}{\hat{\omega}}) (\tr{\omega}{\hat{\omega}})}{u}  + \frac{2}{u} \textrm{Re} \left( \chi^{\ov{s} k} \left( A+ \frac{1}{(\tilde{\varphi} + C_0)^2} \right) (\partial_k \tilde{\varphi}) \partial_{\ov{s}} \tr{\hat{\omega}}{\omega} \right) \\ \label{evQ1}
& - \left( A + \frac{1}{(\tilde{\varphi}+C_0)^2} \right) \left( \ddt{} - \tilde{\Delta} \right) \tilde{\varphi} - \frac{2}{(\tilde{\varphi} + C_0)^3} | \partial \tilde{\varphi}|^2_h. 
\end{align}
But from (\ref{omega0}) and (\ref{lbchi}),
\begin{equation} \label{term2}
 \frac{C (\tr{\chi}{\hat{\omega}}) (\tr{\omega}{\hat{\omega}})}{u} \le \frac{C'(\tr{\chi}{\omega})(\tr{\hat{\omega}}{\hat{\omega}})}{u|s|_H^{4\beta}} \le
 \frac{C''}{u |s|_H^{4\beta}}.
 \end{equation}
Observe that at a maximum of $Q$ we may assume that  $\frac{C''}{u |s|_H^{4 \beta}} \le 1$.  Indeed if not, then assuming that $\delta A \ge 2\beta$,
we have that $\log u + A \delta \log |s|^2_H \le C$ and it follows immediately that  $Q$ is bounded from above, which is what we need to show.  

Next,
\begin{align*}
\frac{2}{u} \textrm{Re} \left(\chi^{\ov{s} k} \left( A+ \frac{1}{(\tilde{\varphi} + C_0)^2} \right) (\partial_k \tilde{\varphi}) \partial_{\ov{s}} \tr{\hat{\omega}}{\omega} \right) &\le \frac{CA}{u} | \partial \tilde{\varphi}|_{\chi} \, | \partial \tr{\hat{\omega}}{\omega} |_{\chi}.
\end{align*}
But
$$\frac{1}{u} | \partial \tilde{\varphi} |^2_{\chi} = \frac{1}{u} \chi^{\ov{j}i} \partial_i \tilde{\varphi} \partial_{\ov{j}} \tilde{\varphi} \le \chi^{\ov{j}k} \chi^{\ov{\ell}i} \hat{g}_{k \ov{\ell}} \partial_i \tilde{\varphi} \partial_{\ov{j}} \tilde{\varphi}\le \frac{C}{|s|_H^{2\beta}} h^{\ov{j}i} \partial_i \tilde{\varphi} \partial_{\ov{j}} \tilde{\varphi} = \frac{C}{|s|_H^{2\beta}} | \partial \tilde{\varphi}|^2_h,$$
and, using (\ref{lbchi}),
$$| \partial \tr{\hat{\omega}}{\omega} |_{\chi}^2 \le \frac{C}{|s|^{2\beta}_H}.$$
Hence
\begin{align*}
\frac{CA}{u} | \partial \tilde{\varphi}|_{\chi} \, | \partial \tr{\hat{\omega}}{\omega} |_{\chi} & \le \frac{C'A}{ \sqrt{u} |s|^{2\beta}_H} | \partial \tilde{\varphi}|_h \\
 & \le 2 \frac{ | \partial \tilde{\varphi}|^2_h}{(\tilde{\varphi} +C_0)^3} + \frac{C'' A^2 (\tilde{\varphi}+C_0)^3}{u |s|_H^{4\beta}}.
\end{align*}


Putting this together, we obtain
\begin{align*}
\frac{2}{u} \textrm{Re} \left(\chi^{\ov{s} k} \left( A+ \frac{1}{(\tilde{\varphi} + C_0)^2} \right) (\partial_k \tilde{\varphi}) \partial_{\ov{s}} \tr{\hat{\omega}}{\omega} \right) &\le 2 \frac{ | \partial \tilde{\varphi}|^2_h}{(\tilde{\varphi} +C_0)^3}  +1,
\end{align*}
since we may assume without loss of generality, by an argument similar to the one given above, that $C''A^2(\tilde{\varphi}+C_0)^3/(u|s|^{4\beta}_H) \le 1$.
Combining this with (\ref{evQ1}) and (\ref{term2}), we obtain at a maximum point of $Q$,
\begin{align} \label{Q1}
0 \le \left( \ddt{} - \tilde{\Delta} \right)Q & \le  C_1\tr{h}{\hat{g}} +2 - \left( A + \frac{1}{(\tilde{\varphi}+C_0)^2} \right) \left( \ddt{} - \tilde{\Delta} \right) \tilde{\varphi}.
\end{align}
Now compute on $M \setminus D$,
\begin{align} \nonumber
\left( \ddt{} - \tilde{\Delta} \right) \tilde{\varphi}   = {}  & c - \tr{\chi}{\omega} - h^{\ov{j}i} \partial_i \partial_{\ov{j}} (\varphi - \delta \log |s|^2_H) \\ \nonumber
  ={} &  c - 2 \chi^{\ov{j}i} g_{i \ov{j}} + h^{\ov{j}i} ((\chi_0)_{i \ov{j}} - \delta (R_H)_{i \ov{j}}) \\ \nonumber
 = {} & c - 2 \chi^{\ov{j}i} g_{i \ov{j}} + \eta h^{\ov{j}i} ( (\chi_0)_{i \ov{j}} - \delta (R_H)_{i \ov{j}}) \\ \label{varphi1}
& \mbox{} + \frac{(1-\eta)}{c}  h^{\ov{j}i} (c\, (\chi_0)_{i \ov{j}} - c\delta (R_H)_{i \ov{j}}) ,
\end{align}
for $\eta>0$ as in (\ref{con1}).  From (\ref{con2}), we choose $A$ (depending only on $C_1$, $\eta$ and $\sigma$) sufficiently large so that
$$A \eta h^{\ov{j}i} ((\chi_0)_{i \ov{j}} - \delta (R_H)_{i \ov{j}}) \ge  C_1 \tr{h}{\hat{g}}.$$
Then from this together with (\ref{Q1}) and (\ref{varphi1}), we obtain
\begin{align}
c - 2 \chi^{\ov{j}i} g_{i \ov{j}} + \frac{(1-\eta)}{c}  h^{\ov{j}i} (c\, (\chi_0)_{i \ov{j}} - c\delta (R_H)_{i \ov{j}}) \le \frac{2}{A},
\end{align}
and hence from (\ref{con1}), 
\begin{align}
c - 2 \chi^{\ov{j}i} g_{i \ov{j}} + \frac{(1-\eta)(1+3\eta)}{c}  h^{\ov{j}i} g_{i\ov{j}} \le \frac{2}{A}.
\end{align}
This implies that, shrinking $\eta$ if necessary,
$$c - 2 \chi^{\ov{j}i} g_{i \ov{j}} + \frac{(1+ \eta)}{c}  h^{\ov{j}i} g_{i\ov{j}} \le \frac{2}{A}.$$
Now choosing coordinates for which $g$ is the identity and  $\chi$ is diagonal with entries $\lambda_1, \lambda_2$, we have
$$c + \frac{(1+\eta)}{c} \sum_{i=1}^2 \frac{1}{\lambda_i^2} - 2 \sum_{i=1}^2 \frac{1}{\lambda_i} \le \frac{2}{A}.$$
Completing the square as in \cite{W2, SW1}, we get
$$\sum_{i=1}^2 \left( \frac{\sqrt{c}}{\sqrt{1+\eta}} - \frac{\sqrt{1+\eta}}{\sqrt{c} \, \lambda_i} \right)^2 \le \frac{2}{A} - c + \frac{2c}{1+\eta}= \frac{2}{A} + \frac{c(1-\eta)}{1+\eta}.$$
We may assume  that $A$ is chosen large enough so that 
$$\frac{2}{A} \le \eta\frac{c(1-\eta)}{1+\eta},$$
and thus
$$\sum_{i=1}^2 \left( \frac{\sqrt{c}}{\sqrt{1+\eta}} - \frac{\sqrt{1+\eta}}{\sqrt{c} \, \lambda_i} \right)^2 \le c(1-\eta).$$
Hence, for $i=1,2$,
$$\frac{\sqrt{c}}{\sqrt{1+\eta}} - \frac{\sqrt{1+\eta}}{\sqrt{c} \, \lambda_i} \le \sqrt{c(1-\eta)},$$
which implies that
$$\lambda_i \le \frac{1+\eta}{c(1-\sqrt{1-\eta^2})}.$$
Then  $$\tr{g}{\chi} \le C,$$
at this maximum point of $Q$.  Hence $\tr{\hat{g}}{\chi} \le C$ at this point, and we see that $Q$ is bounded from above.
This completes the proof.
\end{proof}

The higher order estimates (\ref{ckest}) follows immediately by applying the standard local parabolic theory (as in \cite{SW1}, for example).  This completes the proof of Proposition \ref{mainprop}.

\section{Proof of the main theorem and corollaries} \label{sectionproofs}

We can now prove the main results of the paper. 

\setcounter{equation}{0}

\medskip

\noindent{\it Proof of Theorem \ref{maintheorem}. }   From Proposition \ref{mainprop}, we can find a sequence $\ve_j \rightarrow 0$ such that $\varphi_{\ve_j}$ converges in $C^{\infty}$ on compact subsets of $(M \setminus D) \times [0,\infty)$.  Define on $M \setminus D$,
$$\varphi= \lim_{j \rightarrow \infty} \varphi_{\ve_j}.$$
Then on $(M \setminus D) \times [0,\infty)$,  $\varphi$ is smooth, satisfies $\chi_0 + dd^c \varphi > 0$ and solves the degenerate J-flow equation (\ref{jflowdegen}).
  Moreover, again from Proposition \ref{mainprop},  $\sup_{M\setminus D} |\varphi|$ and $\sup_{M\setminus D}|\dot{\varphi}|$ are uniformly bounded independent of $t$.  It is a standard result in pluripotential theory that a smooth function $\varphi$ on $M-D$ which satisfies $\chi_0 + dd^c \varphi > 0$ and $\sup_{M-D} |\varphi| \le C$  can be extended uniquely  to an element of $\mathcal{P}_{\chi_0}^{\textrm{weak}}$ (see \cite{K3} for example).   Writing again $\varphi$ for this function, we obtain the required solution $\varphi$ to (\ref{jflowdegen}).
  
Now recall that the $\mathcal{J}$-functional is defined on $\mathcal{P}_{\chi_0}$ by (\ref{Jfunctional}).  One can also write down an explicit formula:
\begin{equation} \label{Jfunctional2}
\mathcal{J}_{\omega_0, \chi_0}(\varphi) = \int_M \varphi (\chi_{\varphi} \wedge \omega_0 + \chi_0 \wedge \omega_0) - \frac{c_0}{3} \int_M  \varphi ( \chi_{\varphi}^2 + \chi_{\varphi} \wedge \chi_0 + \chi_0^2), \quad \varphi \in \mathcal{P}_{\chi_0},
\end{equation}
and this definition extends to $\varphi \in \mathcal{P}_{\chi_0}^{\textrm{weak}}$.   From the uniform $L^{\infty}$ bound for the solution $\varphi(t)$ of the degenerate J-flow, as argued in \cite{FLSW}, we see that
\begin{equation}
\mathcal{J}_{\omega_0, \chi_0}(\varphi(t)) \ge -C, 
\end{equation}
for a uniform constant $C$ independent of $t$.  In addition, $\mathcal{J}_{\omega_0, \chi_0}(\varphi(t))$ satisfies
\begin{equation}
\frac{d}{dt} \mathcal{J}_{\omega_0, \chi_0}(\varphi(t)) = - \int_{M\setminus D} \dot{\varphi}^2 \chi_{\varphi(t)}^2 \le 0.
\end{equation}
Then it follows from the proof of Theorem 1.1 in \cite{FLSW} that $\dot{\varphi}$ tends to zero in $C^{\infty}$ on compact subsets of $M \setminus D$.   

Next, define the $\mathcal{I}$-functional on $\mathcal{P}^{\textrm{weak}}_{\chi_0}$ by
$$\mathcal{I}_{\omega_0, \chi_0}(\varphi) = \frac{1}{3} \int_M  \varphi ( \chi_{\varphi}^2 + \chi_{\varphi} \wedge \chi_0 + \chi_0^2),$$
and we see that $\mathcal{I}_{\omega_0, \chi_0}(\varphi(t)) = \mathcal{I}_{\omega_0, \chi_0}(\varphi_0)$ for all $t$. It follows from the same argument as in \cite{FLSW} that as
$t \rightarrow \infty$, the solution $\varphi(t)$ to the degenerate J-flow converges in $C^{\infty}$ on compact subsets of $M\setminus D$ to the unique $\varphi_{\infty} \in \mathcal{P}_{\chi_0}^{\textrm{weak}}$ satisfying 
 the critical equation
 $$2 \chi_{\varphi_{\infty}} \wedge \omega_0 = c_0 \chi^2_{\varphi_{\infty}}$$
 on $M \setminus D$ subject to the normalization condition $\mathcal{I}_{\omega_0, \chi_0} (\varphi_{\infty}) = \mathcal{I}_{\omega_0, \chi_0} (\varphi_0).$   Indeed, to see this last uniqueness statement, observe that $\varphi_{\infty}$ must satisfy
 \begin{equation} \label{cma}
(\alpha_0 + c_0 dd^c \varphi_{\infty})^2 = \omega_0^2, \quad \alpha_0 +c_0 dd^c \varphi_{\infty} >0 \ \textrm{on } M \setminus D,
\end{equation} 
for $\alpha_0 = c_0 \chi_0 - \omega_0>0$.  Moreover, $c_0 \varphi_{\infty}$ lies in $\mathcal{P}_{\alpha_0}^{\textrm{weak}}$.
But such  solutions of the complex Monge-Amp\`ere equation (\ref{cma})  are unique up to the addition of a constant \cite[Corollary 4.2]{K2}.

It remains to prove the uniqueness of the solution $\varphi(t)$ to the degenerate J-flow.  We use an argument similar to one given in \cite{ST}.
Suppose there is another solution  $\psi(t) \in \mathcal{P}_{\chi_0}^{\textrm{weak}}$ of (\ref{jflowdegen})  satisfying $\sup_{M \setminus D} | \dot{\psi}| \le C$.
Define $\theta_\delta= \varphi - \psi - \delta \log |s|_H^2$ on $M \setminus D$, which tends to infinity along $D$. 
For $v\in [0,1]$, let $\eta_v = v \chi_\varphi + (1-v) \chi_\psi$, $\tau_v^{\bar\ell  k } = \eta_v^{\bar j k} \eta_v^{\bar \ell i} (g_0)_{i\bar j}$. Computing as in (\ref{thetaevolve}), we have on $M \setminus D$,
$$\ddt{\theta_\delta} =  \left( \int_0^1 \tau_v^{\ov{\ell}k} dv \right) \partial_k \partial_{\ov{\ell}}  \theta_\delta - \delta \left( \int_0^1 \tau_v^{\ov{\ell}k} dv \right) (R_H)_{k\bar \ell}.$$
Fix a time interval $[0, T]$.  From the estimates $\sup_{M \setminus D} | \dot{\varphi}| \le C$ and $\sup_{M \setminus D} | \dot{\psi}| \le C$ we have the estimate $\eta_v \ge \frac{1}{C} \omega_0$ for a uniform constant $C>0$.  It follows that $(\tau_v^{\ov{\ell}k}) \le C (g_0^{\ov{\ell}k})$.  Then
\begin{align*}
\delta \left( \int_0^1 \tau_v^{\ov{\ell}k} dv \right) (R_H)_{k\bar \ell} & \le C \delta g_0^{\ov{\ell}k} (R_H)_{k \ov{\ell}} \le \frac{2C \delta}{\rho}.
\end{align*}
since, from (\ref{omega0}) we have $\omega_0 - \rho R_H >0$ for a uniform $\rho>0$.

Hence
$$\ddt{\theta_\delta} \ge   \left( \int_0^1 \tau_s^{\ov{\ell}k} ds \right) \partial_k \partial_{\ov{\ell}}  \theta_\delta - \frac{2C \delta}{\rho},$$
and so by the maximum principle, we have 
$$\theta_\delta \geq -A\delta t \geq - A\delta T,$$
for a uniform constant $A$. 
 It follows that $\varphi \geq \psi + \delta \log |s|^2_H - A\delta T$ and so $\varphi\geq \psi$ after letting $\delta \rightarrow 0$. The same argument shows that $\psi \geq \varphi$ and so $\varphi = \psi.$ \qed

\bigskip 
\noindent {\it Proof of Corollary \ref{corollarylb}.}  \ Given the discussion above, this is now immediate, since for any $\varphi_0 \in \mathcal{P}_{\chi_0}$, we have $\mathcal{J}_{\omega_0, \chi_0} (\varphi_0) \ge \lim_{t\rightarrow\infty} \mathcal{J}_{\omega_0,\chi_0}(\varphi(t)) = \mathcal{J}_{\omega_0, \chi_0} (\varphi_{\infty})$.  For the last equality, we have used Lemma 3.2 in \cite{FLSW}.  \qed

\bigskip

\noindent {\it Proof of Corollary \ref{mabuchithm}.} \ The Mabuchi energy on the K\"ahler class $[\chi_0]$ is defined by 
$$\mathcal{M}_{\chi_0} (\varphi)= - \int_0^1 \int_M \dot{\varphi}_s (R_{\chi_{\varphi_s}} - \underline{R}) \chi_{\varphi_s}^2 ds, $$ where $\varphi_s$ is a smooth path in $\mathcal{P}_{\chi_0}$ between $0$ and $\varphi$, and $\underline{R}$ is  the average scalar curvature $\underline{R}= \frac{1}{\int_M \chi_0^2} \int_M R_{\chi_0} \chi_0^2.$  Let
$$E_{\chi_0}(\varphi) = \sqrt{-1}  \int_M \partial \varphi \wedge \dbar \varphi \wedge (\chi_0 +   \chi_{\varphi}).$$ 
be the well-known Aubin-Yau functional (often denoted by $I_{\chi_0}$). Then we say the Mabuchi energy is \emph{proper} \cite{T2} if there exists an increasing  function $f: [0, \infty) \rightarrow \mathbb{R}$, satisfying $\lim_{x\rightarrow \infty} f(x) = \infty$,  such that for all $\varphi \in \mathcal{P}_{\chi_0}$, 
$$\mathcal{M}_{\chi_0}(\varphi) \geq f(E_{\chi_0}(\varphi))).$$
In fact, for the purposes of this corollary, we may take $f$ to be linear (cf. \cite{PSSW}).

Since $K_M$ is big and nef,  it is well-known that there exists a closed nonnegative $(1,1)$ form $\omega_0 \in c_1(K_M)$ satisfying (\ref{omega0}).  Indeed, one can take a Fubini-Study metric induced from the pluricanonical system $|mK_M|$ for sufficiently large $m$, and divide by $m$ to obtain  a smooth  closed nonnegative $(1,1)$ form $\omega_0 \in c_1(K_M)$.  Note that since $\omega_0$ is the pull-back of a holomorphic (and hence smooth) map from $M$ into projective space, it is smooth everywhere on $M$.  However, it is only positive definite on $M \setminus D$ where $D$ is the base locus.  Moreover, $[\omega_0] - \rho c_1([D])$ is K\"ahler for all $\rho>0$ sufficiently small.  Hence we can find a Hermitian metric $H$ on $[D]$ so that $\omega_0 - \rho R_H  \ge \frac{1}{C_0} \hat{\omega}$ for some fixed K\"ahler metric $\hat{\omega}$ and a positive constants $C_0, \rho$.  By definition of $\omega_0$, we have $\omega_0 \ge \frac{1}{C_0}|s|_H^{2\beta} \hat{\omega}$, for some positive $\beta$, after possibly increasing $C_0$.

  The condition (\ref{assume2}) implies that
\begin{equation} \label{cohom}
 c_0  [\chi_0] - [\omega_0]>0, \quad \textrm{for} \quad c_0= \frac{[\chi_0] \cdot [\omega_0] }{[\chi_0]^2}.
\end{equation}
Hence we can apply Corollary \ref{corollarylb} to see that $\mathcal{J}_{\omega_0, \chi_0}$ is uniformly bounded from below.  The formula of Chen \cite{Ch1} gives
$$\mathcal{M}_{\chi_0} = \mathcal{J}_{\omega_0, \chi_0} + \mathcal{F},$$
for a certain functional $\mathcal{F}$, which is proper on $\mathcal{P}_{\chi_0}$ \cite{SW1, T2}.  This completes the proof. \qed

\begin{remark}
\emph{
We remark that one can give an alternative proof of these two corollaries by elliptic methods.  However, we believe that the degenerate J-flow is interesting in its own right, and may be important in extending these results  to higher dimensions.}
\end{remark}

\end{document}